\newtheorem{theorem}{Theorem}[section]
\newtheorem{corollary}[theorem]{Corollary}
\newtheorem{lemma}[theorem]{Lemma}
\theoremstyle{definition}
\newtheorem{assum}{Assumption}
\newtheorem{remark}[theorem]{Remark}
\numberwithin{equation}{section}
\begin{document}
\makeatletter

\title{An Improved Spectral Conjugate Gradient Algorithm Based on A Modified Wolfe Line Search}
\begin{CJK}{UTF8}{gbsn}
	\author[a]{Hao Wu \thanks{E-mail address: wuhoo104@nuaa.edu.cn}
	}
	
	\author[a]{Liping Wang\thanks{E-mail address: wlpmath@nuaa.edu.cn}}
	
	\author[b]{Hongchao Zhang\thanks{E-mail address: hozhang@math.lsu.edu}}
	
	\affil[a]{School of Mathematics, Nanjing University of Aeronautics and Astronautics, Nanjing, China.}
	
	\affil[b]{Department of Mathematics, Louisiana State University Baton Rouge, LA, USA.}
	\maketitle

\end{CJK}

\vspace{2mm}

\footnotesize{
\noindent\begin{minipage}{14cm}
{\bf Abstract:}
In this paper, we combine the $m$th-order Taylor expansion of the objective function with cubic Hermite interpolation conditions. Then, we derive a series of modified secant equations with higher accuracy in approximation of the Hessian matrix of the objective function. A modified Wolfe line search is also developed. It overcomes the weakness of the typical constraint which is imposed on modified secant equations and to keep the curvature condition met. Therefore, based on the modified secant equation and Wolfe line search, an improved spectral conjugate gradient algorithm is proposed. Under some mild assumptions, the algorithm is showed to be globally convergent for general nonconvex functions. Numerical results are also reported for verifying the effectiveness.
\end{minipage}
 \\[5mm]

\noindent{\bf Keywords:} {modified secant equation; spectral conjugate gradient method; Wolfe line search; nonconvex problems; global convergence}\\

\hbox to14cm{\hrulefill}\par

%%%%%%%%%%%%%%%%%%%%%%%%%%%%%%%%%%%%%%%%%%%%%%%%%%%%%%%%%%%%%%%%%%%%%%%
%%%%%%%%%%%%%

\section{Introduction}
 In this paper, we consider the following unconstrained optimization problem
\begin{equation}\label{obj fun}
	\underset{x \in R^{n}}{min}f(x),
\end{equation}
where the objective function $f:R^{n} \rightarrow R$ is continuously differentiable. The gradient of $f(x)$ is denoted by $g(x)$.

Due to the solid theory and efficient numerical performance, the conjugate gradient(CG) method is a well-known method for solving  unconstrained optimization problem \eqref{obj fun}. Its general iterative formula can be given by
\begin{equation}\label{IM}
	x_{k+1}=x_{k}+\alpha_{k}d_{k},
\end{equation}
\begin{equation}\label{dk}
	d_{k}=\begin{cases}
	-g(x_{0}),& if \text{ } k=0,\\
	-g(x_{k})+\beta_{k}d_{k-1},& if \text{ } k \geq 1,
	\end{cases}
\end{equation}
where $\beta_{k}$ is called conjugate parameter, $d_{k}$ is the search direction, and the step size $\alpha_{k} > 0 $ is obtained by some line search. In this paper, we denote $f_{k}=f(x_k)$, $g_{k}=g(x_k)$.

Different conjugate parameters bring about different CG methods, and their convergence and numerical performance are also greatly variant. Here are several well-known formulas for $\beta_{k}$ such as Fletcher-Reeves(FR) \cite{ref1}, Polak-Ribiere-Polak(PRP) \cite{ref2}, Hestenes-Stiefel(HS) \cite{ref3}, Dai-Yuan(DY) \cite{ref4} and so on \cite{ref5,ref6}.
\begin{align*}
	\beta_{k}^{FR}&=\frac{\Vert g_{k}\Vert^2}{\Vert g_{k-1}\Vert^2}, &\quad \beta_{k}^{PRP}&=\frac{g_{k}^{T}y_{k-1}}{\Vert g_{k-1} \Vert ^2}, & \quad
	\beta_{k}^{HS}&=\frac{g_{k}^{T}y_{k-1}}{d^{T}_{k-1}y_{k-1}},\\
	\beta_{k}^{LS}&=\frac{g_{k}^{T}y_{k-1}}{-d^{T}_{k-1}g_{k-1}}, & \quad
	\beta_{k}^{DY}&=\frac{\Vert g_{k}\Vert^2}{d_{k-1}^{T}y_{k-1}}, & \quad
	\beta_{k}^{P}&=\frac{g_{k}^{T}(y_{k-1}-s_{k-1})}{d_{k-1}^{T}y_{k-1}},
\end{align*}
where $y_{k}=g_{k+1}-g_{k}$,$s_{k}=\alpha_{k}d_{k}=x_{k+1}-x_{k}$.

In order to take advantage of second order information of the objective function, Dai and Kou \cite{ref7} proposed a family of CG methods where the search direction is closest to the direction of the scaled memoryless BFGS method \cite{refp,refs}. The conjugate parameter family is as follows:
\begin{equation}\label{beta DK}
	\beta_{k+1}^{DK}(\tau_{k})=\frac{y_{k}^{T}g_{k+1}}{d_{k}^{T}y_{k}}-(\tau_{k}+\frac{\Vert y_{k} \Vert ^{2}}{s_{k}^{T}y_{k}}-\frac{s_{k}^{T}y_{k}}{\Vert s_{k} \Vert ^{2}})\frac{s_{k}^{T}g_{k+1}}{d_{k}^{T}y_{k}},
\end{equation}
where $\tau_{k}$ is a hyperparameter. Dai and Kou emphasized that $\tau_{k}=\frac{s_{k}^{T}y_{k}}{\Vert s_{k} \Vert ^2}$ is the most efficient choice which corresponds to the conjugate parameter
\begin{equation}\label{beta H}
	\beta_{k+1}^{H}= \frac{y_{k}^{T}g_{k+1}}{d_{k}^{T}y_{k}}-\frac{\Vert y_{k} \Vert ^{2}d_{k}^{T}g_{k+1}}{(d_{k}^{T}y_{k})^{2}}.
\end{equation}
Actually, \eqref{beta H} is also a special case of $\beta_{k}^{\theta}$ from Hager and Zhang's CG\_DESCENT \cite{refzhang}.

%Despite the abundant improvement of the traditional CG methods, there still exist some theoretical and numerical challenges, including the choice of suitable step size and %conjugate parameter, as well as the numerical performance on large-scale problems.

Considering efficient actual performance of the spectral gradient method and inspired by its idea \cite{ref8,ref9}, Brigin and Martinez \cite{ref10} made an effort to combine the CG method with the spectral gradient method and proposed a spectral CG method. The search direction is yielded by
\begin{gather}\label{d theta}
d_{k+1} =-\theta_{k+1}g_{k+1}+\beta_{k+1}d_{k}, \text{ } d_{0} =-g_{0},  \\
\theta_{k+1} =\frac{s_{k}^{T}s_{k}}{s_{k}^{T}y_{k}}, \text{ }
\beta_{k+1} =\frac{(\theta_{k+1}y_{k}-s_{k})^{T}g_{k+1}}{d_{k}^{T}y_{k}}.\notag
\end{gather}
where $\theta_{k+1}$ is called spectral parameter, equal to Barzilai-Borwein(BB) step length in \eqref{d theta}. Brigin and Martinez conducted a large amount of experiments to show that the proposed spectral CG method has a better numerical performance than many traditional CG methods, such as FR, PRP and P. Unfortunately, the descent property of \eqref{d theta} can not be guaranteed.

In 2010, Andrei \cite{ref11} proposed a spectral CG method with sufficient descent property, where the search direction is yielded by
\begin{align}\label{dk thetak}
    &d_{k+1} =-\theta_{k+1}g_{k+1}+\beta_{k+1}^{N}s_{k}, \text{ } d_{0} =-g_{0}, \\
    &\theta_{k+1} =\frac{1}{y_{k}^{T}g_{k+1}}(\Vert g_{k+1}\Vert ^2 -\frac{\Vert g_{k+1}\Vert ^2 s^{T}_{k}g_{k+1}}{y_{k}^{T}s_{k}}), \text{ }
    \beta_{k+1}^{N} =\frac{\Vert g_{k+1}\Vert^2}{y_{k}^{T}s_{k}}-\frac{\Vert g_{k+1}\Vert^2s_{k}^{T}g_{k+1}}{(y_{k}^{T}s_{k})^{2}}.\notag
\end{align}\\
The direction $d_{k+1}$ satisfies that $g_{k+1}^{T}d_{k+1} \leq -(\theta_{k+1}-\frac{1}{4})\Vert g_{k+1}\Vert ^{2} $  which possesses the sufficient descent property in case  $\theta_{k+1}>\frac{1}{4}$. Therefore, to ensure sufficient descent in any case, Andrei reset $\theta_{k+1}=1$  when $\theta_{k+1} \leq \frac{1}{4}$.

In 2017, Jian \cite{ref12} proposed a new spectral CG method. It combines the Dai-Kou conjugate parameter given by \eqref{beta H} with the quasi-Newton direction  $d_{k+1} = -B^{-1}_{k+1}g_{k+1}$, where $B_{k+1}$ is an approximation of the Hessian $\nabla^{2}f(x_{k+1})$. The search direction is as follows:
\begin{align}\label{search direction}
	&d_{k+1}=\theta_{k+1}^{JC}g_{k+1}+\beta_{k+1}^{JC}d_{k},\text{ } d_{0}=-g_{0},\\
	&\theta_{k+1}^{JC}=\begin{cases}
	\theta_{k+1}^{JC+},& if \text{ }\theta_{k+1}^{JC+} \in [\frac{1}{4}+\eta ,\tau] \\
	1,& otherwise
	\end{cases},\quad \notag\\
	&\theta_{k+1}^{JC+}=1-\frac{1}{y_{k}^{T}g_{k+1}}(\frac{\Vert y_{k} \Vert ^{2}d_{k}^{T}g_{k+1}}{d_{k}^{T}g_{k+1}}-s_{k}^{T}g_{k+1}),\quad \notag \\
	&\beta_{k+1}^{JC}=\frac{y_{k}^{T}g_{k+1}}{d_{k}^{T}y_{k}}-\frac{\Vert y_{k}\Vert^{2}d_{k}^{T}g_{k+1}}{(d_{k}^{T}y_{k})^{2}},\notag
\end{align}
where $\eta$ and $\tau$ are positive constants. Although Jian only proved the global convergence of the spectral CG method for uniformly convex functions, the method had a good numerical performance on the public test functions collection \cite{ref13}, even superior to CG\_DESCENT which is one of the most efficient CG methods. In this sense, it is meaningful to proceed with further research on the spectral CG method.

The above mentioned methods from Dai and Kou \eqref{beta DK}, Andrei \eqref{dk thetak} and Jian \eqref{search direction} are all based on the standard secant equation
\begin{equation}\label{function value}
	B_{k+1}s_{k}=y_{k}.
\end{equation}
Note that the standard secant equation employs only gradients but ignores function values. So the $B_{k+1}$ satisfying \eqref{function value} is not a good approximation to the Hessian matrix. Techniques using both gradients and function values have been studied by many researchers. In order to get a better approximation of the Hessian matrix, Yuan \cite{ref15}, Khoshgam \cite{ref16}, Biglari \cite{ref17} and Yuan \cite{ref18} respectively proposed the following modified secant equations, using the
Taylor expansion of the objective function with the interpolation condition.
\begin{equation}\label{zk1}
	B_{k+1}s_{k}=z_{k}^{(\infty)},z_{k}^{(\infty)}=y_{k}+\frac{\mu_{k}^{(\infty)}}{\Vert s_{k} \Vert^{2}}s_{k},
\end{equation}
\begin{equation}\label{zk2}
	B_{k+1}s_{k}=z_{k}^{(5)},z_{k}^{(5)}=y_{k}+\frac{\mu_{k}^{(5)}}{\Vert s_{k} \Vert^{2}}s_{k},
\end{equation}
\begin{equation}\label{zk3}
B_{k+1}s_{k}=z_{k}^{(4)},z_{k}^{(4)}=y_{k}+\frac{\mu_{k}^{(4)}}{\Vert s_{k} \Vert^{2}}s_{k},
\end{equation}
\begin{equation}\label{zk4}
B_{k+1}s_{k}=z_{k}^{(3)},z_{k}^{(3)}=y_{k}+\frac{\mu_{k}^{(3)}}{\Vert s_{k} \Vert^{2}}s_{k},
\end{equation}
where
\begin{gather}
	\mu_{k}^{(\infty)}=2(f_{k}-f_{k+1})+(g_{k}+g_{k+1})^{T}s_{k},\notag \\
	\mu_{k}^{(5)}=\frac{10}{3}(f_{k}-f_{k+1})+\frac{5}{3}(g_{k}+g_{k+1})^{T}s_{k},\notag \\
	\mu_{k}^{(4)}=4(f_{k}-f_{k+1})+2(g_{k}+g_{k+1})^{T}s_{k},\notag \\
	\mu_{k}^{(3)}=6(f_{k}-f_{k+1})+3(g_{k}+g_{k+1})^{T}s_{k}.\notag
\end{gather}

%Apart from the above improvement on the standard secant equation. Li and Fukushima notice that when objective function $f$ is nonconvex, the standard BFGS method with Wolfe-type line search is not necessarily globally convergent. They \cite{ref13} suggested a modified BFGS method, which updates matrix $B_{k+1}$ satisfying \eqref{BFGS} instead of \eqref{function value}
%\begin{align}\label{BFGS}
%	&B_{k+1}s_{k}=\overline{z}_{k},\\
%	&\overline{z}_{k}=y_{k}+h_{k}\Vert g_{k}\Vert^{r}s_{k}, \text{ }
%    h_{k}=D+max \left\lbrace  -\frac{s_{k}^{T}y_{k}}{\Vert s_{k} \Vert^{2}},0  \right\rbrace  \Vert g_{k} \Vert^{-r},\notag
%\end{align}
%where $r$ and $D$ are positive constants, specifically $r=1$ and $D=1$ in \cite{ref13}. They proved the global convergence without the convexity assumption on $f$.
%
%There are many reseachers exerting the modified secant eequation \eqref{BFGS} into CG-type methods to improve convergence. For example, Parvaneh and Keyvan \cite{ref14} replaced $y_{k}$ in \eqref{search direction} with $\overline{z}_{k}$ in \eqref{BFGS} and shown that the modified spectral CG method is globally convergent for nonconvex problems.

Considering that function values contribute to a better approximation to the Hessian matrix, we propose a family of modified secant equations, containing the previous variants \eqref{zk1}-\eqref{zk4}. The idea of the best choice among the modified secant equation family can be interpreted from the perspective of polynomial interpolation.

The standard secant equation \eqref{function value} always satisfies the curvature condition $s_k^T y_k >0$ under the standard Wolfe line search, while the modified ones not necessarily yields $s_k^T z_k^{(m)} >0$, where $m$ is the order of Taylor expansion. To meet the curvature condition, a frequently-used correction \cite{refB, refZ, ref16} is to replace $\mu_{k}^{(m)}$ with $max\{\mu_{k}^{(m)},0\}$. However, from the experimental observation, the negative term in sequence $\{\mu_k^{(m)}\}$ occupies a lager proportion. Thus, simply restricting $\mu_{k}^{(m)}$ to non-negative number is not a proper scheme. In order to not only guarantee the curvature condition holds but also retain the negative term in sequence $\{\mu_k^{(m)}\}$ under some conditions, we develop a modified Wolfe line search.

The paper organized is as follows: In Section 2, we derivate a series of modified secant equations, based on which a family of modified spectral CG methods are obtained. Then we propose a modified Wolfe line search and present a new spectral CG algorithm based on Jian's algorithm. In Section 3, the global convergence characteristics are detailedly analysed for general nonconvex functions. In Section 4, we conduct comparative numerical experiments and plot performance profiles of Dolan-More. Finally, the conclusions are presented in Section 5.\\

\section{A new spectral CG algorithm with a modified Wolfe line search}
\subsection{The modified secant equations and spectral CG methods}

Assume the function  $f$ is smooth enough and consider the mth-order approximate model at the iterate $x_{k+1}$,
\begin{equation}\label{eq:phi}
	\begin{split}
	\varphi_{k+1}^{(m)}(d)&=f_{k+1}+d^{T}g_{k+1}+\frac{1}{2}d^{T}\nabla^{2}f(x_{k+1})d+\frac{1}{3!}d^{T}[\nabla^{3}f(x_{k+1})d]d+\cdots \\
    &+\frac{1}{m!}d^{T}[\nabla^{m}f(x_{k+1})\underbrace{d\cdots d}_{m-2}]d+o(\Vert d \Vert^{m}),
	\end{split}
\end{equation}
where $\varphi_{k+1}^{(m)}(d) \approx f(x_{k+1}+d)$ for all $d \in U(x_{k+1},\epsilon)$, $\nabla^{m}f(x_{k+1}) \in R^{\underbrace{n\times n \times n \times \cdots \times n}_{m}}$ is a mth-order tensor and
\begin{equation}
	d^{T}[\nabla^{m}f(x_{k+1})\underbrace{d\cdots d}_{m-2}]d=\sum_{n_{1},\cdots ,n_{m}=1}^{n}\frac{\partial^{m}f(x_{k+1})}{\partial x^{n_{1}},\cdots,\partial x^{n_{m}}}d^{n_{1}},\cdots,d^{n_{m}}.\notag
\end{equation}
The definition of $\varphi_{k+1}^{(m)}(d)$ in \eqref{eq:phi} implies that
\begin{equation}\label{eq:phik}
	\varphi_{k+1}^{(m)}(0)=f_{k+1},
\end{equation}
\begin{equation}\label{eq:gphi}
	\nabla \varphi_{k+1}^{(m)}(0)=g_{k+1}.
\end{equation}
We hope the following cubic Hermite interpolation conditions on the line between $x_k$ and $x_{k+1}$ are satisfied
\begin{equation}\label{eq:phixk}
	\varphi_{k+1}^{(m)}(x_{k}-x_{k+1})=f_{k},
\end{equation}
\begin{equation}\label{eq:gphixk}
	\nabla \varphi_{k+1}^{(m)}(x_{k}-x_{k+1})=g_{k},
\end{equation}
which are equivalent to
\begin{equation}\label{eq:taylerfxk}
	\begin{split}
	f_{k}=&f_{k+1}-s_{k}^{T}g_{k+1}+\frac{1}{2}s_{k}^{T}\nabla^{2}f(x_{k+1})s_{k}+(-1)^{3}\frac{1}{3!}s_{k}^{T}[\nabla^{3}f(x_{k+1})s_{k}]s_{k}+\cdots+ \\
	 &(-1)^{m}\frac{1}{m!}s_{k}^{T}[\nabla^{m}f(x_{k+1})\underbrace{s_{k}\cdots s_{k}}_{m-2}]s_{k}+o(\Vert s_{k} \Vert ^{m}),
	\end{split}
\end{equation}
\begin{equation}\label{eq:taylorgxk}
	\begin{split}
	g_{k}=&g_{k+1}-\nabla ^{2}f(x_{k+1})s_{k}+(-1)^{2}\frac{1}{2}[\nabla^{3}f(x_{k+1})s_{k}]s_{k}+\cdots+ \\
	&(-1)^{m-1}\frac{1}{(m-1)!}[\nabla^{m}f(x_{k+1})\underbrace{s_{k}\cdots s_{k}}_{m-2}]s_{k}+o(\Vert s_{k} \Vert ^{m-1}).
	\end{split}
\end{equation}
By calculating $m \times f_{k}+s_{k}^{T}g_{k}$, we obtain
\begin{equation}\label{sktnabla}
	\begin{split}
		s_{k}^{T}\nabla^{2}f(x_{x_{k+1}})s_{k}=&s_{k}^{T}y_{k}+\frac{m}{m-2}(2(f_{k}-f_{k+1})+(g_{k}+g_{k+1})^{T}s_{k})+\\
	&(-1)^{2}\frac{m-3}{3(m-2)}s_{k}^{T}[\nabla^{3}f(x_{k+1})s_{k}]s_{k}+\\
	&(-1)^{3}\frac{m-4}{12(m-2)}s_{k}^{T}[\nabla^{4}f(x_{k+1})s_{k}s_{k}]s_{k}+\\
	&\cdots\\
	&(-1)^{m-2}(\frac{(3-m)m}{(m-1)!(m-2)}+\frac{1}{(m-2)!})s_{k}^{T}[\nabla^{m-1}f(x_{k+1})\underbrace{s_{k}\cdots s_{k}}_{m-3}]s_{k}+\\
    &o(\Vert s_{k} \Vert^{m}).
	\end{split}
\end{equation}
Then we get a series of modified secant equations about the order $m$,
\begin{align}\label{eq:Bksk}
	&B_{k+1}s_{k}=z_{k}^{(m)},\text{ } z_{k}^{(m)}=y_{k}+\frac{m\mu_{k}}{(m-2)\Vert s_{k} \Vert ^{2}}s_{k},\text{ } m\geq 3, \text{ } m\in Z,\\
	&\mu_{k}=2(f_{k}-f_{k+1})+(g_{k}+g_{k+1})^{T}s_{k},\notag
\end{align}
all of which are appropriate approximation to $\nabla^{2}f(x_{k+1})$. Putting $m=3,4,5$  in \eqref{eq:Bksk} yields the secant equations \eqref{zk4}, \eqref{zk3} and \eqref{zk2}. It is notable that \eqref{zk1} can be regarded as \eqref{eq:Bksk} with $m=+\infty$ , because $\lim\limits_{m \to \infty ,m \in Z} \frac{m}{m-2} =1$. In the following theorem, we discuss the properties that are distinguishable among modified secant equations given by \eqref{eq:Bksk}.
\begin{theorem}
	Assume the function  $f$ is smooth enough. If $\Vert s_{k} \Vert$ is sufficiently small, then we have
	\begin{gather}
		s_{k}^{T}\nabla^{2}f(x_{k+1})s_{k}-s_{k}^{T}z_{k}^{(\infty)}=\frac{1}{3}s_{k}^{T}[\nabla^{3}f(x_{k+1})s_{k}]s_{k}+o(\Vert s_{k} \Vert^{3}),\notag \\
		s_{k}^{T}\nabla^{2}f(x_{k+1})s_{k}-s_{k}^{T}z_{k}^{(5)}=\frac{2}{9}s_{k}^{T}[\nabla^{3}f(x_{k+1})s_{k}]s_{k}+o(\Vert s_{k} \Vert^{3}),\notag \\
		s_{k}^{T}\nabla^{2}f(x_{k+1})s_{k}-s_{k}^{T}z_{k}^{(4)}=\frac{1}{6}s_{k}^{T}[\nabla^{3}f(x_{k+1})s_{k}]s_{k}+o(\Vert s_{k} \Vert^{3}),\notag \\
		s_{k}^{T}\nabla^{2}f(x_{k+1})s_{k}-s_{k}^{T}z_{k}^{(3)}=o(\Vert s_{k} \Vert^{3}),\notag \\
		s_{k}^{T}\nabla^{2}f(x_{k+1})s_{k}-s_{k}^{T}z_{k}^{(m)}=\frac{m-3}{3(m-2)}s_{k}^{T}[\nabla^{3}f(x_{k+1})s_{k}]s_{k}+o(\Vert s_{k} \Vert ^{3}).\notag
	\end{gather}
\end{theorem}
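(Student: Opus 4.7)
The plan is to read the five equalities off directly from the already-derived identity \eqref{sktnabla}, which expresses $s_k^T\nabla^2 f(x_{k+1})s_k$ as a finite combination of $s_k^Ty_k$, $\mu_k$, and higher-order tensor contractions of $s_k$. The only nontrivial observation needed is a bookkeeping one: every tensor term $s_k^T[\nabla^j f(x_{k+1})\underbrace{s_k\cdots s_k}_{j-2}]s_k$ scales like $O(\|s_k\|^j)$, so for $j\geq 4$ it is automatically $o(\|s_k\|^3)$ when $\|s_k\|$ is small, and the same is true of the trailing $o(\|s_k\|^m)$ remainder in \eqref{sktnabla}.

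First I would compute $s_k^T z_k^{(m)}$ from the definition in \eqref{eq:Bksk}: since $z_k^{(m)}=y_k+\frac{m\mu_k}{(m-2)\|s_k\|^2}s_k$, taking the inner product with $s_k$ yields $s_k^Tz_k^{(m)}=s_k^Ty_k+\frac{m}{m-2}\mu_k$, where $\mu_k=2(f_k-f_{k+1})+(g_k+g_{k+1})^Ts_k$. Subtracting this from \eqref{sktnabla}, the $s_k^Ty_k$ term and the $\frac{m}{m-2}\mu_k$ term cancel, leaving
\[
s_k^T\nabla^2 f(x_{k+1})s_k-s_k^Tz_k^{(m)}=\frac{m-3}{3(m-2)}s_k^T[\nabla^3 f(x_{k+1})s_k]s_k+R_k,
\]
where $R_k$ collects the $\nabla^j f$ contributions for $j\geq 4$ plus the $o(\|s_k\|^m)$ remainder. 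By the scaling remark above, $R_k=o(\|s_k\|^3)$, which is exactly the general formula in the last line of the theorem.

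The four special cases then follow by direct substitution of the factor $\tfrac{m-3}{3(m-2)}$: plugging in $m=3,4,5$ gives the coefficients $0,\ \tfrac{1}{6},\ \tfrac{2}{9}$ respectively, recovering the second, third, and fourth equalities (the $m=3$ case absorbs its $\nabla^3 f$ contribution entirely into $o(\|s_k\|^3)$ because the coefficient vanishes). For the $m=\infty$ case I would invoke the interpretation already noted below \eqref{eq:Bksk}, namely that $z_k^{(\infty)}$ is the limit of $z_k^{(m)}$ as $m\to\infty$, and use $\lim_{m\to\infty}\frac{m-3}{3(m-2)}=\frac{1}{3}$.

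I do not expect any real obstacle here; the theorem is essentially a restatement of \eqref{sktnabla} modulo careful remainder-tracking. The one point that warrants attention is ensuring the hidden constants in the $o(\|s_k\|^3)$ terms do not depend on $m$ in the specific finite cases $m=3,4,5$—which is immediate since for each fixed $m$ only finitely many tensor terms appear, each bounded in a neighborhood of $x_{k+1}$ by the smoothness hypothesis. No additional assumptions beyond smoothness of $f$ and smallness of $\|s_k\|$ are required.
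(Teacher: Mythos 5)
Your proposal is correct and follows exactly the route the paper intends: the paper's own proof consists of the single line ``See the equality \eqref{sktnabla},'' and your argument simply spells out that reading-off process (computing $s_k^Tz_k^{(m)}$, cancelling $s_k^Ty_k+\frac{m}{m-2}\mu_k$, substituting $m=3,4,5$ and letting $m\to\infty$, with the $j\ge 4$ tensor terms absorbed into $o(\Vert s_k\Vert^3)$). No discrepancy to report.
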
\label{thm:fsmooth}
\begin{proof}
	 See the equality \eqref{sktnabla}.
\end{proof}
\begin{remark}
 We give an interpretation of the above theoretical results by polynomial interpolation. For the modified secant equation \eqref{zk4}, Yuan \cite{ref18} supposed a cubic approximation to the objective function and used two-point cubic Hermite interpolation conditions \eqref{eq:phik}-\eqref{eq:gphixk} at $x_{k}$  and $x_{k+1}$, hence the $s_{k}^{T}z_{k}^{(3)}$ more exactly approaches to $s_{k}^{T}\nabla^{2}f(x_{k+1})s_{k}$. For \eqref{zk3} and \eqref{zk2}, Biglari \cite{ref17} and Khoshgam \cite{ref16} considered the quartic and quintic model approximating to the function $f$, but they still simply employed cubic Hermite interpolation conditions. It is known that it takes at least $m+1$ interpolation conditions to determine a mth-order interpolation function. That means only interpolation conditions \eqref{eq:phik}-\eqref{eq:gphixk} are insufficient for a higher-order ($m>3$) model, which results in $s_{k}^{T}z_{k}^{(5)}$  and $s_{k}^{T}z_{k}^{(4)}$  possessing low approximation quality compared to $s_{k}^{T}z_{k}^{(3)}$. It is also intuitive that larger value of $m$ leads to the four conditions \eqref{eq:phik}-\eqref{eq:gphixk} appearing more insufficient and causes lower approximation accuracy. As for \eqref{zk1}, although it is actually from the quadratic model with interpolation conditions \eqref{eq:phik}-\eqref{eq:phixk}, on the other hand, it can be yielded from the model where $m=\infty$ with the conditions \eqref{eq:phik}-\eqref{eq:gphixk}. Thus the $s_{k}^{T}z_{k}^{(\infty)}$ is naturally in last place.
\end{remark}

In the following, we improve the spectral CG method proposed by Jian. Consider the following spectral conjugate direction
\begin{equation}\label{eq:dk+1}
	d_{k+1}=-\theta_{k+1}g_{k+1}+\beta_{k+1}d_{k},\text{ } d_{0}=-g_{0},
\end{equation}
where the conjugate parameter $\beta_{k+1}$ is given by a truncated form of \eqref{beta H},
\begin{align}\label{eq:beta k+1}
	&{{\beta }_{k+1}^{M}}=\max \{{\beta }_{k+1}^{L}, {\beta}_{k+1}^{R} \}, \\
    & {\beta }_{k+1}^{L}=\frac{g_{k+1}^{T}{{z}_{k}^{(m)}}}{d_{k}^{T}{{z}_{k}^{(m)}}}-\frac{||{{z}_{k}^{(m)}}|{{|}^{2}}}{d_{k}^{T}{{z}_{k}^{(m)}}}\frac{g_{k+1}^{T}{{d}_{k}}}{d_{k}^{T}{{z}_{k}^{(m)}}}, \notag\\
    & {\beta}_{k+1}^{R}=\frac{g_{k}^{T}{{d}_{k}}}{||{{d}_{k}}|{{|}^{2}}}.\notag
\end{align}

As for the choice for the spectral parameter $\theta_{k+1}$, we hope $d_{k+1}$ could be the quasi-Newton direction, i.e.
\begin{equation}\label{eq:d_k+1}
	d_{k+1} = -B_{k+1}^{-1}g_{k+1}.
\end{equation}
It follows that
\begin{equation}\label{eq:Bk+1}
	-B_{k+1}^{-1}g_{k+1}= -\theta_{k+1}g_{k+1}+\beta_{k+1}d_{k}.
\end{equation}
Multiplying \eqref{eq:Bk+1} by $s_k^{T}B_{k+1}$, we have
\begin{equation}\label{eq:theta}
	\theta_{k+1} = \frac{1}{s_{k}^{T}B_{k+1}g_{k+1}}(s_{k}^{T}g_{k+1}+\beta_{k+1}d_{k}^{T}B_{k+1}s_{k}).
\end{equation}
We apply the modified secant condition $B_{k+1}s_{k}=z_{k}^{(m)}$ to \eqref{eq:theta} to obtain a choice for $\theta_{k+1}$,
\begin{equation}
\tilde{\theta}_{k+1}= \frac{1}{g_{k+1}^{T}z_{k}^{(m)}}(s_{k}^{T}g_{k+1}+\beta_{k+1}d_{k}^{T}z_{k}^{(m)}).
\end{equation}
The following theorem \cite{ref12} indicates that the spectral
conjugate direction ${d}_{k+1}$ with ${\theta}_{k+1}=\tilde{\theta}_{k+1}, \beta_{k+1}=\beta_{k+1}^{L}$ has sufficient descent property under certain condition.
\begin{theorem}\label{thm:dk+1} \cite{ref12}
	If $g_{k+1}^{T}z_{k}^{(m)} \neq 0$ and $d_{k}^{T}z_{k}^{(m)} \neq 0$, the direction ${d}_{k+1}$, where ${\theta}_{k+1}=\tilde{\theta}_{k+1}, \beta_{k+1}=\beta_{k+1}^{L}$, satisfies
	\begin{equation}\label{eq:dksatisify}
		g_{k+1}^{T}{d}_{k+1} \leq -({\theta}_{k+1}-\frac{1}{4})\Vert g_{k+1} \Vert^{2},\text{ } k \geq 0.
	\end{equation}
	where ${\theta}_{1}=1$. If ${\theta}_{k+1}>\frac{1}{4}$, ${d}_{k+1}$is a sufficient descent direction.
\end{theorem}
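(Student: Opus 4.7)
The plan is to expand $g_{k+1}^T d_{k+1}$ directly from the spectral direction formula with $\theta_{k+1}=\tilde\theta_{k+1}$ and $\beta_{k+1}=\beta_{k+1}^L$, and then reduce the claim to a single-variable quadratic optimization controlled by the Cauchy--Schwarz inequality.

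The first step is the decomposition
\begin{equation*}
 g_{k+1}^T d_{k+1} = -\tilde\theta_{k+1}\,\Vert g_{k+1}\Vert^{2} + \beta_{k+1}^L\,(g_{k+1}^T d_k),
\end{equation*}
so the target inequality is equivalent to showing
\begin{equation*}
 \beta_{k+1}^L\,(g_{k+1}^T d_k) \leq \tfrac{1}{4}\,\Vert g_{k+1}\Vert^{2}.
\end{equation*}

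Next, I would introduce the scalar $t = (g_{k+1}^T d_k)/(d_k^T z_k^{(m)})$, which is well-defined by the hypothesis $d_k^T z_k^{(m)} \neq 0$. Substituting the definition of $\beta_{k+1}^L$ and collecting in $t$ yields
\begin{equation*}
 \beta_{k+1}^L\,(g_{k+1}^T d_k) = t\,(g_{k+1}^T z_k^{(m)}) - t^{2}\,\Vert z_k^{(m)}\Vert^{2},
\end{equation*}
a downward-opening quadratic in $t$. Its maximum over real $t$, obtained by completing the square, equals $(g_{k+1}^T z_k^{(m)})^{2}/(4\Vert z_k^{(m)}\Vert^{2})$, and Cauchy--Schwarz gives $(g_{k+1}^T z_k^{(m)})^{2} \leq \Vert g_{k+1}\Vert^{2}\Vert z_k^{(m)}\Vert^{2}$, so this maximum is bounded above by $\tfrac{1}{4}\Vert g_{k+1}\Vert^{2}$. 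Combining the three displays yields the stated sufficient-descent bound; the base case $k=0$ follows trivially from $d_0=-g_0$ together with the convention $\theta_1=1$.

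I do not anticipate a substantial obstacle. The only insight required is to recognize that $\beta_{k+1}^L$ has been engineered precisely so that $\beta_{k+1}^L(g_{k+1}^T d_k)$ is a concave quadratic in the scalar $t$ whose vertex is controllable by Cauchy--Schwarz; once the substitution is in place, the verification reduces to one line of algebra. Since the statement is attributed to Jian~\cite{ref12}, I would present the short identity above and refer to \cite{ref12} for any further discussion.
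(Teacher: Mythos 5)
Your proof is correct: the reduction to $\beta_{k+1}^L\,(g_{k+1}^T d_k) \le \tfrac{1}{4}\Vert g_{k+1}\Vert^2$, the substitution $t=(g_{k+1}^T d_k)/(d_k^T z_k^{(m)})$ giving the concave quadratic $t\,(g_{k+1}^T z_k^{(m)})-t^2\Vert z_k^{(m)}\Vert^2$, and the Cauchy--Schwarz bound on its vertex is exactly the standard Hager--Zhang/Dai--Kou style argument used in \cite{ref12}, which is what this paper relies on (it states the theorem with a citation and gives no proof of its own). The only cosmetic point is that the hypothesis guarantees $z_k^{(m)}\neq 0$ (so $\Vert z_k^{(m)}\Vert^2>0$), and no induction or base case is actually needed since the estimate is direct for each $k$.
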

Further, to ensure sufficient descent of search directions and boundedness of spectral parameters, we take a truncation for $\tilde{\theta}_{k+1}$:
\begin{equation}\label{eq:thetak+1}
	\bar{\theta}_{k+1}=\begin{cases}
	\tilde{\theta}_{k+1},& if \text{ } \tilde{\theta}_{k+1} \in [\frac{1}{4}+\eta ,\tau],\\
	1,& otherwise,
	\end{cases}
\end{equation}
where $\eta$  is a small positive constant and $\tau$ is a upper bound, leading to $\bar{\theta}_{k+1} \in [\frac{1}{4}+\eta ,\tau]$ .
\begin{corollary}\label{thm:2.3}
	 From $\theta_{k+1}=\bar{\theta}_{k+1}$  and \eqref{eq:dksatisify}, we have
	 \begin{equation}\label{eq:gkgeq}
	 	g_{k+1}^{T}{d}_{k+1} \leq -\eta \Vert g_{k+1} \Vert^{2},\text{ } k \geq 0.
	 \end{equation}
\end{corollary}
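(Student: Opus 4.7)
The plan is straightforward: I would combine the descent bound from Theorem \ref{thm:dk+1} with the two-case definition of $\bar{\theta}_{k+1}$ in \eqref{eq:thetak+1} to produce a uniform lower bound $\bar{\theta}_{k+1} - 1/4 \geq \eta$, from which \eqref{eq:gkgeq} is immediate.

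First I would apply Theorem \ref{thm:dk+1} with $\theta_{k+1} = \bar{\theta}_{k+1}$ to write
$$g_{k+1}^{T}d_{k+1} \leq -\left(\bar{\theta}_{k+1} - \tfrac{1}{4}\right)\Vert g_{k+1}\Vert^{2}, \quad k \geq 0,$$
so that the whole claim reduces to showing a uniform lower bound on $\bar{\theta}_{k+1} - 1/4$. The convention $\theta_1 = 1$ in Theorem \ref{thm:dk+1} handles the initial direction $d_0 = -g_0$ at $k = 0$ without any extra argument.

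Next I would split on the two cases in \eqref{eq:thetak+1}. If $\tilde{\theta}_{k+1} \in [1/4+\eta, \tau]$, then $\bar{\theta}_{k+1} = \tilde{\theta}_{k+1} \geq 1/4 + \eta$, so the desired slack is immediate. Otherwise the truncation fires and $\bar{\theta}_{k+1} = 1$, giving $\bar{\theta}_{k+1} - 1/4 = 3/4 \geq \eta$ under the standing interpretation that $\eta$ is a small positive constant (in particular $\eta \leq 3/4$). In either branch the slack is at least $\eta$, and substituting back yields \eqref{eq:gkgeq}.

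There is no real obstacle here; the argument is bookkeeping over the truncation rule. The only conceptual point worth flagging is that the truncation in \eqref{eq:thetak+1} is engineered precisely so that both branches leave a uniform strictly positive slack $\bar{\theta}_{k+1} - 1/4 \geq \eta$, and it is exactly this uniform slack that will be needed to drive the global convergence analysis in Section 3.
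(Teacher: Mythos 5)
Your proposal is correct and follows essentially the same route as the paper: the corollary is stated there without a separate proof precisely because the truncation \eqref{eq:thetak+1} guarantees $\bar{\theta}_{k+1}\in[\frac{1}{4}+\eta,\tau]$, so that \eqref{eq:dksatisify} immediately gives $g_{k+1}^{T}d_{k+1}\leq -(\bar{\theta}_{k+1}-\frac{1}{4})\Vert g_{k+1}\Vert^{2}\leq -\eta\Vert g_{k+1}\Vert^{2}$. Your explicit case split (including the observation that the branch $\bar{\theta}_{k+1}=1$ gives slack $3/4\geq\eta$) is just a slightly more detailed rendering of the same argument.
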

Then, we show ${{d}_{k+1}}$ with $\beta_{k+1},\theta_{k+1}$ given by \eqref{eq:beta k+1}, \eqref{eq:thetak+1} possesses sufficient descent property.
\begin{theorem}\label{thm:ndk+1}
	 For the direction ${d}_{k+1}$ where ${\theta}_{k+1}=\tilde{\theta}_{k+1}, \beta_{k+1}=\beta_{k+1}^{M}$, if $g_{k+1}^{T}z_{k}^{(m)} \neq 0$ and $d_{k}^{T}z_{k}^{(m)} \neq 0$, we have
	 \begin{equation}\label{eq:gdec}
	 	g_{k+1}^{T}d_{k+1} \leq -\eta \Vert g_{k+1} \Vert^{2},\text{ } k \geq 0.
	 \end{equation}
\end{theorem}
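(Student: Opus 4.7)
The plan is a case analysis on which argument attains the maximum in the definition of $\beta_{k+1}^{M}$.

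\textbf{Case A: $\beta_{k+1}^{M}=\beta_{k+1}^{L}$.} Here the spectral conjugate direction coincides exactly with the one analysed in Theorem \ref{thm:dk+1}, so inequality \eqref{eq:dksatisify} applies verbatim. Combining it with the truncation rule \eqref{eq:thetak+1}, which forces $\theta_{k+1}\in[\tfrac{1}{4}+\eta,\tau]$, we immediately obtain
\[
g_{k+1}^{T}d_{k+1}\leq -\left(\theta_{k+1}-\tfrac{1}{4}\right)\|g_{k+1}\|^{2}\leq -\eta\|g_{k+1}\|^{2},
\]
which is exactly the conclusion of Corollary \ref{thm:2.3}. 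Nothing beyond bookkeeping is needed in this branch.

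\textbf{Case B: $\beta_{k+1}^{M}=\beta_{k+1}^{R}=g_{k}^{T}d_{k}/\|d_{k}\|^{2}$.} Substituting into \eqref{eq:dk+1} and pairing with $g_{k+1}$ yields
\[
g_{k+1}^{T}d_{k+1}=-\theta_{k+1}\|g_{k+1}\|^{2}+\frac{(g_{k}^{T}d_{k})(g_{k+1}^{T}d_{k})}{\|d_{k}\|^{2}}.
\]
The plan here is to apply Cauchy--Schwarz to the factor $g_{k+1}^{T}d_{k}$, giving $(g_{k+1}^{T}d_{k})^{2}\leq\|g_{k+1}\|^{2}\|d_{k}\|^{2}$, and then a Young-type splitting $|ab|\leq\tfrac{\lambda}{2}a^{2}+\tfrac{1}{2\lambda}b^{2}$ with the parameter $\lambda$ calibrated against the lower bound $\theta_{k+1}-\eta\geq\tfrac{1}{4}$ guaranteed by the truncation, so that a sufficient multiple of $\|g_{k+1}\|^{2}$ is absorbed into the dominant term $-\theta_{k+1}\|g_{k+1}\|^{2}$.

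The hardest step will be controlling the residual produced by this splitting, which naturally scales with $(g_{k}^{T}d_{k})^{2}/\|d_{k}\|^{2}$ rather than with $\|g_{k+1}\|^{2}$ and, after a second application of Cauchy--Schwarz, with $\|g_{k}\|^{2}$. Closing the estimate uniformly in $k$ will therefore require either propagating the sufficient-descent bound of the previous iteration inductively (the base case $k=0$ is trivial since $d_{0}=-g_{0}$), or invoking a curvature-type estimate furnished by the modified Wolfe line search developed later in the paper. Either way, Case B is where the genuinely new content of Theorem \ref{thm:ndk+1} lies; Case A is just a restatement of the Jian inequality after truncation.
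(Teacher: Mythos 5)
Your Case A is exactly the paper's first branch, but your Case B contains a genuine gap: the Cauchy--Schwarz/Young route cannot close. When $g_{k+1}^{T}d_{k}<0$ the cross term $(g_{k}^{T}d_{k})(g_{k+1}^{T}d_{k})/\|d_{k}\|^{2}$ is positive, and after the two applications of Cauchy--Schwarz you describe it is only bounded by $\|g_{k}\|\,\|g_{k+1}\|$. Since the truncation \eqref{eq:thetak+1} caps $\theta_{k+1}$ at $\tau$, the dominant term $-\theta_{k+1}\|g_{k+1}\|^{2}$ cannot absorb a quantity of order $\|g_{k}\|\,\|g_{k+1}\|$ when $\|g_{k}\|\gg\|g_{k+1}\|$; no choice of the Young parameter $\lambda$ fixes this scale mismatch, and the quantitative induction bound $g_{k}^{T}d_{k}\le-\eta\|g_{k}\|^{2}$ does not help either, because the obstruction is the ratio $\|g_{k}\|/\|g_{k+1}\|$, which is uncontrolled. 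Invoking the modified Wolfe curvature condition is also off target: the theorem is a statement about the direction $d_{k+1}$ alone, proved before any step size enters.

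The missing idea is a sign comparison, not a magnitude estimate. Argue by induction: the base case is $g_{0}^{T}d_{0}=-\|g_{0}\|^{2}$, and the hypothesis $g_{k}^{T}d_{k}<0$ gives $\beta_{k+1}^{R}=g_{k}^{T}d_{k}/\|d_{k}\|^{2}<0$; in your Case B the maximum in \eqref{eq:beta k+1} is attained at $\beta_{k+1}^{R}$, so $\beta_{k+1}^{L}\le\beta_{k+1}^{R}<0$. If $g_{k+1}^{T}d_{k}\ge 0$, the cross term $\beta_{k+1}^{R}\,g_{k+1}^{T}d_{k}$ is nonpositive and $g_{k+1}^{T}d_{k+1}\le-\theta_{k+1}\|g_{k+1}\|^{2}\le-\eta\|g_{k+1}\|^{2}$ immediately. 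If $g_{k+1}^{T}d_{k}<0$, then $\beta_{k+1}^{R}\,g_{k+1}^{T}d_{k}\le\beta_{k+1}^{L}\,g_{k+1}^{T}d_{k}$, hence $g_{k+1}^{T}d_{k+1}\le-\theta_{k+1}\|g_{k+1}\|^{2}+\beta_{k+1}^{L}\,g_{k+1}^{T}d_{k}$, and the right-hand side is controlled by Corollary \ref{thm:2.3}, i.e.\ by your Case A bound. This is how the paper closes Case B: no Cauchy--Schwarz, no Young splitting, and the induction hypothesis is used only for the sign of $g_{k}^{T}d_{k}$, never for its size.
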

\begin{proof}
	 Use mathematical induction. First, $d_0$ satisfies that $g_0^{T}d_0=-\Vert g_0 \Vert^2 < -\eta \Vert g_0 \Vert^2$. Then, suppose $d_k$ satisfies \eqref{eq:gdec}. If ${{\beta }_{k+1}}= {\beta }_{k+1}^{L}$, then \eqref{eq:gdec} follows from Corollary \ref{thm:2.3}. If ${{\beta }_{k+1}}= {\beta }_{k+1}^{R}$ and ${{d}_{k}}$ satisfies \eqref{eq:gdec}, then ${\beta }_{k+1}^{L} \leq {\beta }_{k+1}^{R} < 0$. Multiply ${{d}_{k+1}}$ by ${g}_{k+1}^{T}$, we have
\begin{equation}
	g_{k+1}^{T}d_{k+1} = -{\theta}_{k+1} \Vert g_{k+1} \Vert^{2} + {\beta }_{k+1}^{R}g_{k+1}^{T}d_{k}. \notag
\end{equation}
 If $g_{k+1}^{T}d_{k} \ge 0$, then \eqref{eq:gdec} is obtained immediately. If $g_{k+1}^{T}d_{k} < 0$, then
\begin{equation}
	g_{k+1}^{T}d_{k+1} = -{\theta}_{k+1} \Vert g_{k+1} \Vert^{2} + {\beta }_{k+1}^{R}g_{k+1}^{T}d_{k} \leq -{\theta}_{k+1} \Vert g_{k+1} \Vert^{2} + {\beta }_{k+1}^{L}g_{k+1}^{T}d_{k}. \notag
\end{equation}
\eqref{eq:gdec} follows from Corollary \ref{thm:2.3}. Hence, ${{d}_{k+1}}$ satisfies \eqref{eq:gdec}. The proof is complete.
\end{proof}

\subsection{A modified Wolfe line search}

We notice that for nonlinear functions, the value $s_{k}^{T}z_{k}^{(m)}$ may turn out to be zero or negative while $s_{k}^{T}y_{k}$ is always positive. Thus, \eqref{eq:Bksk} seems not to be reasonable. The most common improved measure is to replace $\mu_{k}$ in \eqref{eq:Bksk} with $\mu_{k}^{+}=max\{\mu_{k},0\}$, which is only a compromised method. When some $\mu_{k}$ is negative, we have to discard it and the modified secant equation \eqref{eq:Bksk} degenerates into the standard secant equation \eqref{function value}.

From the experimental observations, where we take ARWHEAD function for example and display results by Figure \ref{sgn} and Table \ref{24}, we find that the negative value in sequence $\{\mu_k\}$ occupies a not small proportion; The absolute value of $\mu_{k}$ is big in the only first few iterations, where $\mu_{k}$ is nearly always negative. However, the positive $\mu_{k}$ is almost always very small. Thus, it is necessary to take into account negative $\mu_{k}$. The purpose of this section is to develop an modified Wolfe line search, which enables negative $\mu_{k}$ to be retained, not to be compulsorily set as 0.

\begin{figure}[htbp]
	\centering
	\includegraphics[scale=0.6]{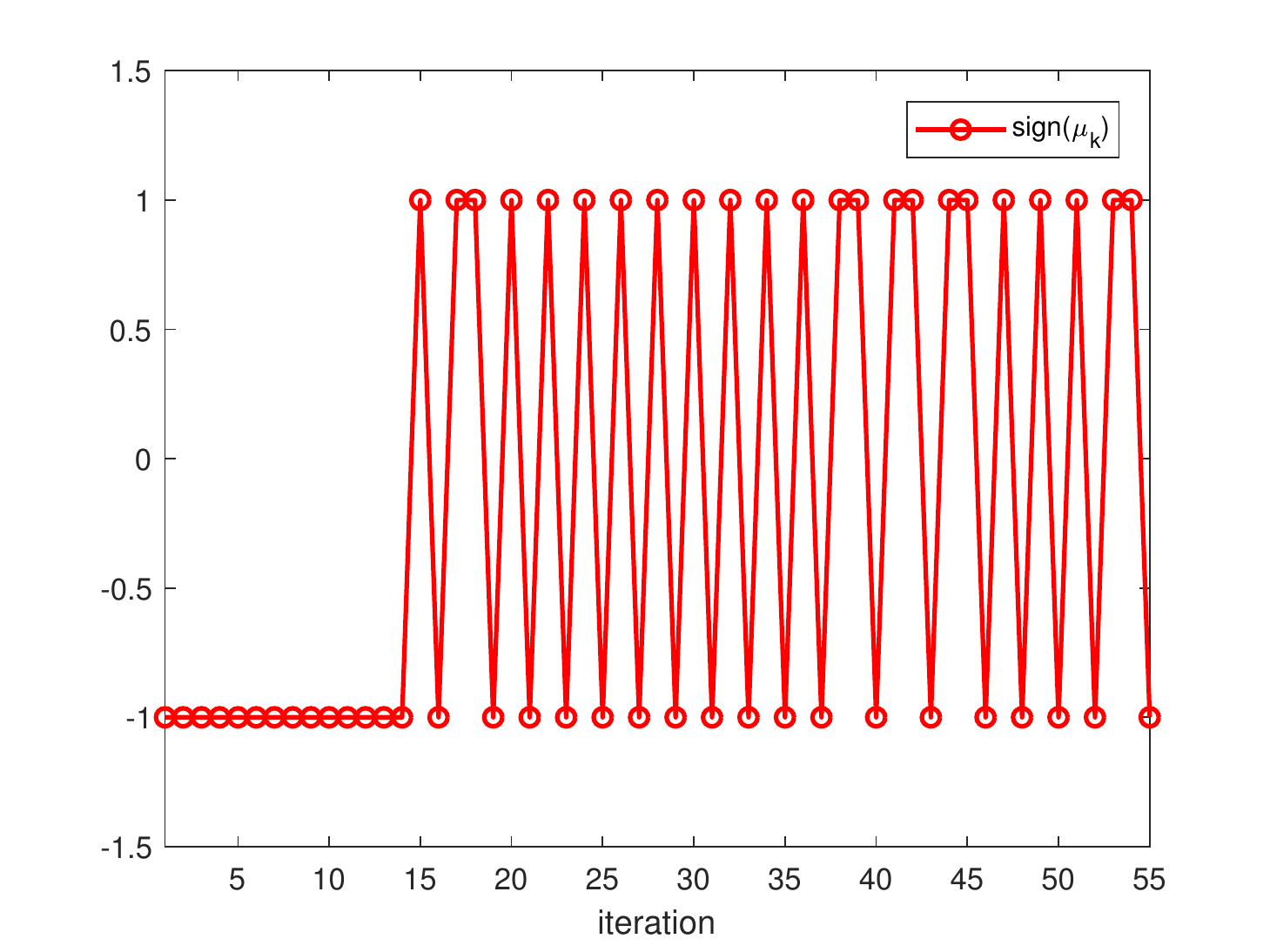}	
	\caption{The sign of $\mu_k$}
    \label{sgn}	
\end{figure}

\begin{table}[H]
\tiny
\begin{tabular}{|c|c|c|c|c|c|c|c|c|}
\hline
iteration & 1          & 2          & 3          & 4          & 5          & 6          & 7          & 8          \\ \hline
$\mu_k$         & -1.107E+05 & -2.634E+03 & -3.307E+03 & -2.750E-01 & -8.784E-01 & -1.102E-07 & -4.256E-07 & -6.419E-04 \\ \hline
iteration & 9          & 10         & 11         & 12         & 13         & 14         & 15         & 16         \\ \hline
$\mu_k$         & -2.595E-09 & -2.381E-07 & -5.132E-14 & -7.026E-13 & -2.812E-13 & -3.412E-13 & 2.983E-14  & -3.223E-14 \\ \hline
iteration & 17         & 18         & 19         & 20         & 21         & 22         & 23         & 24         \\ \hline
$\mu_k$         & 7.043E-16  & 2.123E-14  & -2.165E-14 & 1.962E-13  & -1.958E-13 & 4.604E-13  & -4.602E-13 & 5.752E-13  \\ \hline
\end{tabular}
\caption{The value of $\mu_k$ in former 24 iterations}
\label{24}
\end{table}

Before giving the modified Wolfe line search, we need a limitation upon $z_{k}^{(m)}$ in \eqref{eq:Bksk},
\begin{align}\label{eq:Bkskl}
	&z_{k}^{(m)}=y_{k}+t_{k}s_{k},\text{ } m\geq 3, \text{ } m\in Z,\\
    &t_k=\begin{cases}
        \frac{m\mu_{k}}{(m-2)\Vert s_{k} \Vert ^{2}}, &if \text{ } \mu_{k} > 0,\\
        C\frac{\mu_{k}}{\Vert s_{k} \Vert ^{2}}, &if \text{ } \mu_{k} \le 0,
    \end{cases}\notag\\
	&\mu_{k}=2(f_{k}-f_{k+1})+(g_{k}+g_{k+1})^{T}s_{k},\notag\\
    &C=\frac{\sigma-\rho}{1-2\rho+\sigma}, 0<\rho <\sigma <1.\notag
\end{align}
Now, based on the standard Wolfe line search, our modified Wolfe line search is given as follows:
\begin{equation}\label{eq:wolfe conditon}
	\begin{split}
	&f(x_{k}+\alpha_{k}d_{k}) \leq f(x_{k})+\rho \alpha_{k}g_{k}^{T}d_{k},\\
	&(g(x_{k}+\alpha_{k}d_{k})+\min \{t_{k},0\}s_{k})^{T}d_{k} \geq \sigma g_{k}^{T}d_{k},
	\end{split}
\end{equation}

It is not difficult to prove that there exists a suitable stepsize satisfying \eqref{eq:wolfe conditon}. First, some basic and mild assumptions as follows are necessary.
\begin{assum}\label{p:3.1}
(1) $f$ is bounded below, i.e. $f(x)>-\infty,\text{ }\forall x \in R^{n}$.\\
(2) The level set $S=\{x\in {{R}^{n}}:f(x)\le f({{x}_{0}})\}$is bounded, namely, there exists a constant $M$ such that
\begin{equation}\label{eq:51}
	||x||\le M,\text{ }\forall x\in S.
\end{equation}
(3) In some neighborhood $\Omega $ of $S$, $f$ is continuously differentiable and its gradient is Lipschitz continuous, i.e. there exists a constant $L>0$ such that
\begin{equation}\label{eq:52}
	||g(x)-g(y)||\le L||x-y||,\text{ }\forall x,y\in \Omega,
\end{equation}
which implies that there exists a constant $\gamma > 0$ such that
\begin{equation}\label{eq:53}
	||g(x)||\le \gamma ,\text{ }\forall x\in S.
\end{equation}
\end{assum}
\begin{lemma}\label{lemma:exist}
    Suppose that Assumption \ref{p:3.1} holds and $d_k$ is a descent direction, then there must exist a suitable stepsize satisfying modified Wolfe conditons \eqref{eq:wolfe conditon}.
    \begin{proof}
        It is known that there exists a smallest stepsize $\alpha_k^{\prime}>0$ such that
        \begin{equation}\label{2.22}
            f(x_{k}+\alpha_k^{\prime}d_k)=f(x_k)+\alpha_k^{\prime}\rho g_k^{T}d_k.
        \end{equation}
        The relation $f(x_{k}+\alpha_{k}d_{k}) \leq f(x_{k})+ \rho \alpha_{k}g_{k}^{T}d_{k}$ clearly holds for all stepsizes less than $\alpha_k^{\prime}$. By the mean value theorem, there exists $\alpha_k^{\prime\prime} \in (0,\alpha_k^{\prime})$ such that
        \begin{equation}\label{2.23}
            f(x_{k}+\alpha_k^{\prime}d_k)-f(x_k)=\alpha_k^{\prime}g(x_k+\alpha_k^{\prime\prime}d_k)^{T}d_k.
        \end{equation}
        Combine \eqref{2.22} with \eqref{2.23}, we obtain
        \begin{equation}
            g(x_k+\alpha_k^{\prime\prime}d_k)^{T}d_k=\rho g_k^{T}d_k>\sigma g_k^{T}d_k.
        \end{equation}
        Let $s_k=\alpha_k^{\prime\prime}d_{k}$. If $\mu_k>0$, it is obvious that the second condition in \eqref{eq:wolfe conditon} holds. If $\mu_k \le 0$, we have
        \begin{equation}
        \begin{split}
            g(x_k+\alpha_k^{\prime\prime}d_k)^{T}s_k+t_{k}||s_{k}||^2 &=\rho g_k^{T}s_k+C\mu_k=\rho g_k^{T}s_k+C(2(f_{k}-f_{k+1})+(g_{k}+g_{k+1})^{T}s_{k})\\
            & \ge \rho g_k^{T}s_k+C(-2\rho g_k^{T}s_k+g_k^{T}s_k+\sigma g_k^{T}s_k)\\
            & =\rho g_k^{T}s_k+(\sigma-\rho)g_k^{T}s_k\\
            & =\sigma g_k^{T}s_k.
        \end{split}
        \end{equation}
        So the second condition in \eqref{eq:wolfe conditon} holds. The proof is complete.
    \end{proof}
\end{lemma}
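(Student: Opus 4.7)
The plan is to adapt the classical existence argument for the standard Wolfe line search, with the additional work concentrated in verifying the modified curvature condition when $\mu_k$ is nonpositive. First I would use Assumption \ref{p:3.1} together with the descent property $g_k^{T}d_k<0$ to show that the function $\varphi(\alpha):=f(x_k+\alpha d_k)-f(x_k)-\rho\alpha g_k^{T}d_k$ satisfies $\varphi(0)=0$, $\varphi'(0)<0$, and is bounded below while eventually becoming nonnegative (since $f$ is bounded below but the affine comparison tends to $-\infty$). This produces a smallest $\alpha'_k>0$ at which the Armijo equality \eqref{2.22} holds, with the strict Armijo inequality holding for every smaller stepsize. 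A standard mean value theorem application on $[0,\alpha'_k]$ then yields $\alpha''_k\in(0,\alpha'_k)$ with $g(x_k+\alpha''_k d_k)^{T}d_k=\rho g_k^{T}d_k>\sigma g_k^{T}d_k$ (using $\rho<\sigma$ and $g_k^{T}d_k<0$). Since $\alpha''_k<\alpha'_k$, the Armijo condition also holds at $\alpha''_k$, so setting $s_k:=\alpha''_k d_k$ reduces the task to verifying only the second inequality in \eqref{eq:wolfe conditon}.

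I would then split on the sign of $\mu_k$. When $\mu_k>0$, $\min\{t_k,0\}=0$ and the condition collapses to the standard curvature inequality, which is immediate from the previous step. When $\mu_k\le 0$, the specifically chosen constant $C=(\sigma-\rho)/(1-2\rho+\sigma)$ must do all the work: I would combine the Armijo bound $2(f_k-f_{k+1})\ge -2\rho g_k^{T}s_k$ with the identity $g_{k+1}^{T}s_k=\rho g_k^{T}s_k$ coming from the mean value step, then apply the relaxation $\rho g_k^{T}s_k\ge \sigma g_k^{T}s_k$ (valid because $g_k^{T}s_k<0$ and $\sigma>\rho$), to obtain $\mu_k\ge (1-2\rho+\sigma)g_k^{T}s_k$. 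Multiplying by $C$ and adding $\rho g_k^{T}s_k=g_{k+1}^{T}s_k$ yields exactly $\sigma g_k^{T}s_k$, and dividing by $\alpha''_k>0$ produces the desired modified curvature inequality.

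The main obstacle is bookkeeping rather than conceptual: because $g_k^{T}s_k$ is negative, one has to be careful when loosening $\rho$ to $\sigma$ and when multiplying inequalities by the positive constant $C$, so that the final inequality still points in the right direction. Apart from this sign tracking, the argument is a direct extension of the textbook Wolfe existence proof, with $C$ engineered precisely to furnish the factor $(\sigma-\rho)$ that bridges the gap between $\rho$ and $\sigma$ on the right-hand side of the modified Wolfe condition.
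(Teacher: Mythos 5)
Your proposal is correct and follows essentially the same route as the paper's proof: locate the smallest stepsize $\alpha_k^{\prime}$ where the Armijo condition holds with equality, apply the mean value theorem to get $\alpha_k^{\prime\prime}\in(0,\alpha_k^{\prime})$ with $g(x_k+\alpha_k^{\prime\prime}d_k)^{T}d_k=\rho g_k^{T}d_k>\sigma g_k^{T}d_k$, and then, for $\mu_k\le 0$, use the Armijo bound together with the relaxation $\rho g_k^{T}s_k\ge\sigma g_k^{T}s_k$ so that the constant $C=(\sigma-\rho)/(1-2\rho+\sigma)$ delivers the modified curvature inequality in \eqref{eq:wolfe conditon}. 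No substantive differences from the paper's argument.
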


For the modified Wolfe line search, we can establish the Zoutendijk condition\cite{ref19}, which is a significant tool to proof the convergence of CG-type methods.
\begin{lemma}\label{lemma:2.7}
	Suppose that Assumption \ref{p:3.1} holds. Consider the iterative scheme \eqref{IM}, where ${{d}_{k}}$ is a descent direction and ${{\alpha }_{k}}$ is determined by the modified Wolfe conditions. Then
	\begin{equation}\label{eq:54}
		\sum\limits_{k=0}^{\infty }{\frac{{{(g_{k}^{T}{{d}_{k}})}^{2}}}{||{{d}_{k}}|{{|}^{2}}}<\infty }.
	\end{equation}
\end{lemma}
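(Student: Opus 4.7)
The plan is to adapt the classical Zoutendijk argument to the modified Wolfe line search. The crucial observation is that the extra term introduced into the curvature condition, namely $\min\{t_k, 0\}\, s_k^T d_k$, is always nonpositive: since $s_k = \alpha_k d_k$ with $\alpha_k > 0$, we have $s_k^T d_k = \alpha_k \|d_k\|^2 \geq 0$, while $\min\{t_k, 0\} \leq 0$ by construction. Hence the second condition in \eqref{eq:wolfe conditon} is at least as strong as the standard Wolfe curvature condition $g_{k+1}^T d_k \geq \sigma g_k^T d_k$, and the textbook Zoutendijk derivation should carry over with only minor bookkeeping.

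Concretely, I would proceed as follows. First, rewrite the modified curvature condition as
\begin{equation*}
  (g_{k+1} - g_k)^T d_k \;\geq\; (\sigma - 1) g_k^T d_k - \min\{t_k, 0\}\, s_k^T d_k \;\geq\; (\sigma - 1) g_k^T d_k,
\end{equation*}
where the second inequality uses the nonpositivity observed above. Next, combining Cauchy--Schwarz with the Lipschitz estimate \eqref{eq:52} yields $(g_{k+1} - g_k)^T d_k \leq L\, \alpha_k \|d_k\|^2$. Together with $g_k^T d_k < 0$ and $\sigma < 1$, this gives the familiar lower bound
\begin{equation*}
  \alpha_k \;\geq\; \frac{(1-\sigma)(-g_k^T d_k)}{L\, \|d_k\|^2}.
\end{equation*}
Substituting this into the Armijo-type sufficient decrease condition in \eqref{eq:wolfe conditon} gives
\begin{equation*}
  f_k - f_{k+1} \;\geq\; -\rho\, \alpha_k\, g_k^T d_k \;\geq\; \frac{\rho(1-\sigma)(g_k^T d_k)^2}{L\, \|d_k\|^2}.
\end{equation*}
Summing from $k=0$ to $\infty$, the left-hand side telescopes to a quantity bounded by $f(x_0) - \inf f$, which is finite by Assumption \ref{p:3.1}(1), and \eqref{eq:54} follows.

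I do not expect any real obstacle here. The only place that warrants attention is the sign tracking at the curvature step: one must confirm that $\min\{t_k, 0\}\, s_k^T d_k \leq 0$ before dropping it, which relies on $d_k$ being a descent direction so that $\alpha_k > 0$ and hence $s_k^T d_k \geq 0$. Beyond that, the argument is essentially identical to the classical proof of Zoutendijk's condition under the standard Wolfe line search, so the novelty lies entirely in recognizing that the modification preserves the structural inequality needed for the argument.
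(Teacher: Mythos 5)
Your proof is correct, and it takes a cleaner route than the paper at the one step where the modification matters. The paper does not simply drop the correction term: it keeps the full $t_k$ term in the inequality $(g_{k+1}-g_k)^T d_k \ge (\sigma-1)g_k^T d_k - t_k\alpha_k\|d_k\|^2$, and therefore first has to estimate $t_k$. This is done by writing $\mu_k$ via the mean value theorem as $(g_k-g(\varepsilon_k)+g_{k+1}-g(\varepsilon_k))^T s_k$ and invoking Lipschitz continuity to get $|\mu_k|\le L\|s_k\|^2$, whence $-CL\le t_k\le \frac{mL}{m-2}$; this yields the step-size bound $\alpha_k\ge \frac{(1-\sigma)(-g_k^T d_k)}{L(1+\frac{m}{m-2})\|d_k\|^2}$ and the Zoutendijk constant $p=\frac{\rho(1-\sigma)}{L(1+\frac{m}{m-2})}$. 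You instead observe that $\min\{t_k,0\}\,s_k^T d_k=\min\{t_k,0\}\,\alpha_k\|d_k\|^2\le 0$, so the modified curvature condition implies the standard Wolfe condition $g_{k+1}^T d_k\ge\sigma g_k^T d_k$, and the classical argument goes through verbatim with the sharper, $m$-independent constant $\frac{\rho(1-\sigma)}{L}$. Your route buys simplicity (no estimate of $\mu_k$ or $t_k$ is needed for this lemma) and a better constant; the paper's route has the side benefit of establishing the bound $|\mu_k|\le L\|s_k\|^2$ and the range of $t_k$, which it reuses later (e.g.\ in bounding $\|z_k^{(m)}\|$ in Lemma 3.3). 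One cosmetic point: the nonpositivity of the correction term needs only $\alpha_k>0$ (which holds by the definition of the line search) and $\min\{t_k,0\}\le 0$; the descent property of $d_k$ is what guarantees the line search is well defined and gives $g_k^T d_k<0$ in the final bound, not the sign of $s_k^T d_k$.
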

\begin{proof}
Note that
\begin{equation}\label{eq:222}
    \begin{split}
    {\mu}_{k}& =2(f_{k}-f_{k+1})+(g_{k}+g_{k+1})^{T}s_{k}=2g^{T}({\varepsilon}_{k})(x_{k}-x_{k+1})+(g_{k}+g_{k+1})^{T}s_{k} \\
    & =-2g^{T}({\varepsilon}_{k})s_{k}+(g_{k}+g_{k+1})^{T}s_{k}=(g_{k}-g^{T}({\varepsilon}_{k})+g_{k+1}-g^{T}({\varepsilon}_{k}))^{T}s_{k}
    \end{split}
\end{equation}
where ${{\varepsilon }_{k}}=v{{x}_{k}}+(1-v){{x}_{k+1}},v\in (0,1)$. By \eqref{eq:52}, we have
\begin{equation}\label{eq:63}
	\begin{split}
	|{{\mu }_{k}}|&\le (||{{g}_{k}}-g({{\varepsilon }_{k}})||+||{{g}_{k+1}}-g({{\varepsilon }_{k}})||)||{{s}_{k}}|| \\
	&\le L(||{{x}_{k}}-{{\varepsilon }_{k}}||+||{{\varepsilon }_{k}}-{{x}_{k+1}}||)||{{s}_{k}}||=L||{{s}_{k}}|{{|}^{2}}. \\
	\end{split}
\end{equation}
If $\mu_k>0$, then $0<t_k \le \frac{mL}{m-2}$. If $\mu_k \le 0$, then $-CL \le t_k \le 0$, where $0<C<1$. Thus we have
	\begin{equation}\label{tk}
		-CL \le t_k \le \frac{mL}{m-2}.
	\end{equation}
It follows from \eqref{eq:52} and the second condition in \eqref{eq:wolfe conditon} that
	\begin{equation}
		L\alpha_{k} ||d_k||^2 \ge (g_{k+1}-g_{k})^{T}d_{k} \ge (\sigma-1)g_k^{T}d_k-t_{k} \alpha_{k}||d_k||^2, \notag
	\end{equation}
which with \eqref{tk} gives
	\begin{equation}\label{2.32}
		\alpha_{k} \ge \frac{\sigma-1}{L+t_k} \frac{g_k^T d_k}{||d_k||^2} \ge \frac{\sigma-1}{L(1+\frac{m}{m-2})} \frac{g_k^T d_k}{||d_k||^2}.
	\end{equation}
It follows from \eqref{2.32} and the first condition in \eqref{eq:wolfe conditon} that
	\begin{equation}\label{2.34}
		f_k-f_{k+1}\ge p \frac{(g_k^{T}d_k)^2}{||d_k||^2},
	\end{equation}
where $p=\frac{\rho(1-\sigma)}{L(1+\frac{m}{m-2})}$. Summing \eqref{2.34} over $k$ and noting that $f$ is bounded below, we see \eqref{eq:54} holds. The proof is complete.
\end{proof}
Now, based on the above analysis, the detailed steps of the new algorithm, denoted SCGMMWLS, is given below
\begin{algorithm}[htb]
	\caption{ SCGMMWLS}
	\label{alg:Framwork1}
	\begin{algorithmic}[1]
		\Require
		An initial point ${{x}_{0}}\in {{R}^{n}}$, $\varepsilon >0$, positive integer $MaxIter$.
		\State Set $k=0,\text{ }{{d}_{0}}=-{{g}_{0}}$.
		\State If $\Vert{{g}_{k}}\Vert _{\infty }\leq \varepsilon $ or $k> MaxIter$, stop.
		\State Compute $\alpha_{k}$  by the modified
		Wolfe conditions \eqref{eq:wolfe conditon}.
		\State Update ${{x}_{k+1}}={{x}_{k}}+{{\alpha }_{k}}{{d}_{k}}$ and $g_{k+1}$.
		\State Compute $\beta _{k+1}$ ,  $\theta_{k+1}$ and  $d_{k+1}$ by \eqref{eq:beta k+1}, \eqref{eq:thetak+1} and \eqref{eq:dk+1}, where $z_{k}^{(m)}$ is determined by \eqref{eq:Bkskl}.
		\State Set  $k=k+1$ and go to step 2.
	\end{algorithmic}
\end{algorithm}

\section{The analysis of convergence}

In this section, we show that SCGMMSE has global convergence that
\begin{equation}\label{eq:58}
	\underset{k\to \infty }{\mathop{\lim }}\,\inf ||{{g}_{k}}||=0.
\end{equation}
To this aim, we proceed by contradiction and assume that \eqref{eq:58} does not hold, i.e. there exists a positive constant $\xi $ such that
\begin{equation}\label{eq:59}
	||{{g}_{k}}||\ge \xi,\text{ }k\ge 0.
\end{equation}

Besides Assumption \ref{p:3.1} and lemma \ref{lemma:2.7}, here are two necessary lemmas.
\begin{lemma}\label{lem:3.6}
	Suppose Assumption \ref{p:3.1} holds. Consider the iterative form \eqref{IM}, where ${{\alpha }_{k}}$ is determined by the modified Wolfe conditions and $d_k$ is given by \eqref{eq:dk+1} containing $\beta _{k+1}$ \eqref{eq:beta k+1} and $\theta_{k+1}$ \eqref{eq:thetak+1}. If \eqref{eq:59} holds, then
	\begin{equation}\label{eq:67}
		\sum\limits_{k\ge 1}{||{{u}_{k}}-{{u}_{k-1}}|{{|}^{2}}<\infty },\text{ }{{u}_{k}}=\frac{{{d}_{k}}}{||{{d}_{k}}||},\text{ }{{d}_{k}}\ne 0.
	\end{equation}
\end{lemma}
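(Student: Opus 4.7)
The plan is to combine the Zoutendijk condition from Lemma~\ref{lemma:2.7} with a Gilbert--Nocedal-style analysis of the unit directions $u_k$, exploiting the truncation $\beta_{k+1}^{M}=\max\{\beta_{k+1}^{L},\beta_{k+1}^{R}\}$ in \eqref{eq:beta k+1} to control potentially negative values of the conjugate parameter. First I would use the sufficient descent property $g_k^T d_k \le -\eta \|g_k\|^2$ from Theorem~\ref{thm:ndk+1} together with the contradiction hypothesis \eqref{eq:59} to obtain $(g_k^T d_k)^2 \ge \eta^2 \xi^4$; substituting into the Zoutendijk inequality \eqref{eq:54} immediately yields $\sum_{k\ge 0} 1/\|d_k\|^2 < \infty$. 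In particular, the set $\{k : \|d_k\| \le C\}$ is finite for every constant $C > 0$. Writing $u_k = r_k + \delta_k u_{k-1}$ with $r_k = -\theta_k g_k/\|d_k\|$ and $\delta_k = \beta_k^{M} \|d_{k-1}\|/\|d_k\|$, the bounds $\theta_k \le \tau$ and $\|g_k\| \le \gamma$ give $\|r_k\|^2 \le \tau^2 \gamma^2/\|d_k\|^2$, so $\sum_k \|r_k\|^2 < \infty$.

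Next I would split by the sign of $\beta_k^{M}$. In Case A ($\beta_k^{M} \ge 0$, hence $\delta_k \ge 0$), the classical Gilbert--Nocedal estimate applies: from $\delta_k u_{k-1} = u_k - r_k$ and the reverse triangle inequality, $|\delta_k - 1| = |\,\|u_{k-1}\| - \|u_k - r_k\|\,| \le \|r_k\|$, so that $\|u_k - u_{k-1}\| \le \|r_k\| + |\delta_k - 1| \le 2\|r_k\|$, and the contribution to the target sum is bounded by $4 \sum_k \|r_k\|^2 < \infty$. In Case B ($\beta_k^{M} < 0$), the truncation $\beta_k^{M} \ge \beta_k^{R}$ (both negative) yields $|\beta_k^{M}| \le |\beta_k^{R}| = |g_{k-1}^T d_{k-1}|/\|d_{k-1}\|^2 \le \gamma/\|d_{k-1}\|$ by Cauchy--Schwarz, hence $|\delta_k| \le \gamma/\|d_k\|$. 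The unit-norm identity $\|u_k\| = 1$ then gives $1 \le \|r_k\| + |\delta_k| \le (\tau + 1)\gamma/\|d_k\|$, forcing $\|d_k\| \le (\tau + 1)\gamma$, which by the finiteness established above can happen for only finitely many $k$; each such term satisfies $\|u_k - u_{k-1}\|^2 \le 4$ trivially, contributing a finite total.

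Combining the two cases gives $\sum_{k\ge 1} \|u_k - u_{k-1}\|^2 < \infty$. The main obstacle is Case B: the standard Gilbert--Nocedal trick fails when $\beta$ is negative, and the argument hinges on the max-truncation $\beta_k^{M} \ge \beta_k^{R}$ from \eqref{eq:beta k+1}, which forces $|\beta_k^{M}|$ to decay like $\gamma/\|d_{k-1}\|$ in that regime so that the offending situation cannot persist along a subsequence. The modified-Wolfe-derived lower bound $d_{k-1}^T z_{k-1}^{(m)} \ge (1-\sigma)|g_{k-1}^T d_{k-1}|$, which one obtains by casework on the sign of $t_{k-1}$ in \eqref{eq:wolfe conditon}, enters only indirectly, to guarantee that the denominators in $\beta_k^{L}$ are nonzero and hence $\beta_k^{M}$ is well-defined.
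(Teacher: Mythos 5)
Your proof is correct, and it follows the same Gilbert--Nocedal framework as the paper while organizing the key step differently. Both arguments rest on identical ingredients: the Zoutendijk condition \eqref{eq:54} combined with sufficient descent \eqref{eq:gdec} and \eqref{eq:59} to get $\sum_k \|d_k\|^{-2}<\infty$, the bounds $\theta_k\le\tau$ and $\|g_k\|\le\gamma$, and the crucial consequence of the truncation in \eqref{eq:beta k+1} that whenever $\beta_k$ is negative it satisfies $|\beta_k|\le |g_{k-1}^{T}d_{k-1}|/\|d_{k-1}\|^{2}\le\gamma/\|d_{k-1}\|$. The difference is where the negative values of $\beta_k$ are absorbed: the paper splits $\beta_k$ into $\beta_k^{1}=\max\{\beta_k,0\}$ and $\beta_k^{2}=\min\{\beta_k,0\}$, folds the negative part into the residual $\omega_k=(-\theta_k g_k+\beta_k^{2}d_{k-1})/\|d_k\|$ so that $\delta_k\ge 0$ for every $k$, and thus gets the single uniform estimate $\|u_k-u_{k-1}\|\le 2\|\omega_k\|\le 2(1+\tau)\gamma/\|d_k\|$, from which summability is immediate; you instead keep the full $\beta_k^{M}$ in $\delta_k$ and handle $\beta_k^{M}<0$ by a separate finiteness argument, showing $\|d_k\|\le(\tau+1)\gamma$ in that regime, which can occur only finitely often because $\|d_k\|\to\infty$ under \eqref{eq:59}. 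Both case analyses are sound; the paper's decomposition is marginally cleaner since it avoids the transience argument and gives one bound valid for all $k$, whereas your version makes explicit that the negative-$\beta$ situation cannot persist. Your closing observation is also accurate: within this lemma the modified Wolfe condition is needed only to keep $d_k^{T}z_k^{(m)}>0$ so that $\beta_k^{L}$ is well defined (cf.\ \eqref{eq:78}); its substantive use comes in Lemma \ref{lemma:3.7}.
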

\begin{proof}
	Divide ${{\beta }_{k+1}}$ given by \eqref{eq:beta k+1} into two parts as follows:
	\begin{equation}\label{eq:68}
		%\beta _{k+1}^{1}=\max \{\frac{g_{k+1}^{T}{{v}_{k}}}{d_{k}^{T}{{v}_{k}}}-(1+\frac{{{(s_{k}^{T}{{v}_{k}})}^{2}}}{||{{s}_{k}}|{{|}^{2}}||{{v}_{k}}|{{|}^{2}}})\frac{||{{v}_{k}}|{{|}^{2}}g_{k+1}^{T}{{d}_{k}}}{{{(d_{k}^{T}{{v}_{k}})}^{2}}}+(1-\mu )\frac{g_{k+1}^{T}{{d}_{k}}}{||{{d}_{k}}|{{|}^{2}}},0\},
    \beta _{k+1}^{1}=\max \{\beta _{k+1},0\},
	\end{equation}
	\begin{equation}\label{eq:69}
		%\beta _{k+1}^{2}=\mu \frac{g_{k+1}^{T}{{d}_{k}}}{||{{d}_{k}}|{{|}^{2}}}.
    \beta _{k+1}^{2}=\min \{\beta_{k+1},0\}.
	\end{equation}
	Define
	\begin{equation}\label{eq:70}
		{{\omega }_{k}}=\frac{-{{\theta }_{k}}{{g}_{k}}+\beta _{k}^{2}{{d}_{k-1}}}{||{{d}_{k}}||},
	\end{equation}
	\begin{equation}\label{eq:71}
		{{\delta }_{k}}=\frac{\beta _{k}^{1}||{{d}_{k-1}}||}{||{{d}_{k}}||}.
	\end{equation}
	From ${{d}_{k+1}}=-{{\theta }_{k+1}}{{g}_{k+1}}+{{\beta }_{k+1}}{{d}_{k}}$, we have\
	\begin{equation}\label{eq:72}
		{{u}_{k}}={{\omega }_{k}}+{{\delta }_{k}}{{u}_{k-1}},\text{ }k\ge 1.
	\end{equation}
	%Using the identity $||{{u}_{k}}||=||{{u}_{k-1}}||=1$ and \eqref{eq:72}, we obtain
    Since $\{u_k,k \ge 0 \}$ are unit vectors,
	\begin{equation}\label{eq:73}
		||{{\omega }_{k}}||=||{{u}_{k}}-{{\delta }_{k}}{{u}_{k-1}}||=||{{\delta }_{k}}{{u}_{k}}-{{u}_{k-1}}||.
	\end{equation}
	Using \eqref{eq:73} and the condition ${{\delta }_{k}}\ge 0$, we derivate
	\begin{equation}\label{eq:74}
		\begin{split}
		& ||{{u}_{k}}-{{u}_{k-1}}||\le ||(1+{{\delta }_{k}}){{u}_{k}}-(1+{{\delta }_{k}}){{u}_{k-1}}|| \\
		& \text{                }\le ||{{u}_{k}}-{{\delta }_{k}}{{u}_{k-1}}||+||{{\delta }_{k}}{{u}_{k}}-{{u}_{k-1}}||=2||{{\omega }_{k}}||. \\
		\end{split}
	\end{equation}
	By \eqref{eq:59}, the relation ${{\theta }_{k}}\le \tau $, the definition of $\beta _{k+1}^{2}$ and the fact $\frac{g_{k}^{T}{{d}_{k}}}{||{{d}_{k}}|{{|}^{2}}} \leq \beta _{k+1} \leq 0$, we have
	\begin{equation}\label{eq:75}
		||-{{\theta }_{k}}{{g}_{k}}+\beta _{k}^{2}{{d}_{k-1}}||\le \tau ||{{g}_{k}}||+\frac{|g_{k-1}^{T}{{d}_{k-1}}|}{||{{d}_{k-1}}|{{|}^{2}}}||{{d}_{k-1}}||\le(1 +\tau )\gamma.
	\end{equation}
	From \eqref{eq:70}, \eqref{eq:74} and \eqref{eq:75}, it follows that
	\begin{equation}\label{eq:76}
		||{{u}_{k}}-{{u}_{k-1}}||\le \frac{2(1 +\tau )\gamma}{||{{d}_{k}}||}.
	\end{equation}
	The descent property of ${{d}_{k}}$ \eqref{eq:gdec} with the relation \eqref{eq:59} yields
	\begin{equation}\label{eq:77}
		\frac{1}{||{{d}_{k}}|{{|}^{2}}}\le \frac{1}{{{\xi }^{4}}}\frac{||{{g}_{k}}|{{|}^{4}}}{||{{d}_{k}}|{{|}^{2}}}\le \frac{1}{{{\xi }^{4}}{{\eta }^{2}}}\frac{{{(g_{k}^{T}{{d}_{k}})}^{2}}}{||{{d}_{k}}|{{|}^{2}}}.
	\end{equation}
	Thus \eqref{eq:67} holds by Lemma \ref{lemma:2.7}. The proof is complete.
\end{proof}

\begin{lemma}\label{lemma:3.7}
Suppose Assumption \ref{p:3.1} holds. Consider the iterative form \eqref{IM}, where ${{\alpha }_{k}}$ is determined by the modified Wolfe conditions and $d_k$ is given by \eqref{eq:dk+1} containing $\beta _{k+1}$ \eqref{eq:beta k+1} and $\theta_{k+1}$ \eqref{eq:thetak+1}, then ${{\beta }_{k+1}}$ has the Property (*) in \cite{ref20}:\\
(1) There exists a constant $b>1$, such that $|{{{\beta}}_{k+1}}|\le b,\text{ }\forall k\ge 0,$\\
(2) There exists a constant $c>0$, such that $||{{s}_{k}}||\le c$, then $|{{{\beta }}_{k+1}}|\le \frac{1}{b},\text{ }\forall k\ge 0.$
\end{lemma}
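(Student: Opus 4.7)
The plan is to derive uniform upper bounds on both $\beta_{k+1}^L$ and $\beta_{k+1}^R$ that decay with $\|s_k\|$, from which the two parts of Property (*) follow by taking $\|s_k\|\le 2M$ for part (1) and $\|s_k\|\le c$ with $c$ small for part (2). I read the statement of Property (*) as implicitly carrying the standing hypothesis $\|g_k\|\ge \xi>0$, in the sense in which it will be used in the subsequent convergence theorem.

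The first step is to exploit that the modified Wolfe condition \eqref{eq:wolfe conditon} has been tailored precisely to keep $d_k^T z_k^{(m)}$ uniformly bounded below, regardless of the sign of $\mu_k$. Writing $z_k^{(m)}=y_k+t_k s_k$, multiplying the second inequality in \eqref{eq:wolfe conditon} through, and using $s_k=\alpha_k d_k$ together with the sufficient descent inequality \eqref{eq:gdec}, one obtains in both sign cases
\[
d_k^T z_k^{(m)}\ge (1-\sigma)(-g_k^T d_k)\ge (1-\sigma)\eta\|g_k\|^2\ge (1-\sigma)\eta\xi^2>0.
\]
Assumption \ref{p:3.1} and \eqref{tk} immediately give $\|g_k\|\le\gamma$, $\|y_k\|\le L\|s_k\|$, $|t_k|\le T:=\max\{mL/(m-2),CL\}$, $\|z_k^{(m)}\|\le (L+T)\|s_k\|$, and $\|s_k\|\le 2M$. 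Sufficient descent combined with Cauchy--Schwarz yields $\|d_k\|\ge\eta\|g_k\|\ge\eta\xi$, so $|\beta_{k+1}^R|=|g_k^T d_k|/\|d_k\|^2\le\|g_k\|/\|d_k\|\le 1/\eta$, with $\beta_{k+1}^R\le 0$.

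The second step is to bound $\beta_{k+1}^L$. Splitting by the triangle inequality and inserting the estimates above,
\[
|\beta_{k+1}^L|\le \frac{\gamma(L+T)\|s_k\|}{(1-\sigma)\eta\xi^2}+\frac{(L+T)^2\|s_k\|^2\,|g_{k+1}^T d_k|}{\bigl((1-\sigma)\eta\xi^2\bigr)^2}.
\]
The first summand is $O(\|s_k\|)$, uniformly bounded by $\|s_k\|\le 2M$. The main obstacle is the second summand: the naive bound $|g_{k+1}^T d_k|\le\gamma\|d_k\|$ leaves a factor $\|s_k\|^2\|d_k\|$ for which no uniform bound is immediate. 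To circumvent this I would rewrite $|g_{k+1}^T d_k|=|g_{k+1}^T s_k|/\alpha_k\le\gamma\|s_k\|/\alpha_k$ and invoke the lower bound on $\alpha_k$ of the form \eqref{2.32}, $\alpha_k\ge (1-\sigma)\eta\|g_k\|^2/((L+T)\|d_k\|^2)$, together with $\|s_k\|\|d_k\|=\alpha_k\|d_k\|^2$, so as to cancel the offending power of $\|d_k\|$ against a hidden power of $\|s_k\|$. The outcome is an estimate of the form $|\beta_{k+1}^L|\le C_1\|s_k\|+C_2\|s_k\|^p$ for explicit constants $C_1,C_2$ and some $p>0$ depending only on $\gamma,\sigma,\rho,\eta,L,\xi,m$; the purely algebraic bookkeeping is the main place where care is required.

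The third step combines the two branches via $|\beta_{k+1}^M|\le\max\{|\beta_{k+1}^L|,|\beta_{k+1}^R|\}$ (using that $\beta_{k+1}^R\le 0$ so that when $\beta_{k+1}^L\le 0$ one has $|\beta_{k+1}^M|\le|\beta_{k+1}^R|$, while when $\beta_{k+1}^L>0$ one has $|\beta_{k+1}^M|=\beta_{k+1}^L$). Part (1) then follows by setting $b:=\max\{1/\eta,\;C_1(2M)+C_2(2M)^p\}+1$; part (2) follows by picking $c>0$ so small that $C_1 c+C_2 c^p\le 1/b$ and $\gamma/\|d_k\|\le 1/b$ is automatic once $\|s_k\|=\alpha_k\|d_k\|\le c$ is combined with the uniform upper bound $\alpha_k\le (f_0-\inf f)/(\rho\eta\xi^2)$ obtained from the first Wolfe condition and sufficient descent. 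The hardest technical step, as flagged above, is controlling the second summand of $|\beta_{k+1}^L|$; everything else is a routine assembly of Assumption \ref{p:3.1}, the modified Wolfe bounds, and the sufficient-descent consequences collected in Corollary \ref{thm:2.3}.
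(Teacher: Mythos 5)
Your setup (the lower bound $d_k^Tz_k^{(m)}\ge(1-\sigma)\eta\xi^2$, the bounds $\|z_k^{(m)}\|\le(L+T)\|s_k\|$, $\|s_k\|\le 2M$) matches the paper, but the step you yourself flag as the main obstacle is a genuine gap, and the cancellation you propose does not close it. Writing the second summand as $\frac{\|z_k^{(m)}\|^2}{(d_k^Tz_k^{(m)})^2}|g_{k+1}^Td_k|$, your substitution $|g_{k+1}^Td_k|=|g_{k+1}^Ts_k|/\alpha_k\le\gamma\|s_k\|/\alpha_k$ leaves $\|s_k\|^3/\alpha_k=\|s_k\|^2\|d_k\|$, i.e.\ exactly the naive bound again; feeding in the stepsize lower bound $\alpha_k\gtrsim\xi^2/\|d_k\|^2$ makes things worse (it reintroduces $\|d_k\|^2$), and $\|s_k\|\|d_k\|=\alpha_k\|d_k\|^2$ combined with the upper bound on $\alpha_k$ still leaves an unbounded power of $\|d_k\|$. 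No a priori bound on $\|d_k\|$ is available at this stage — controlling the growth of $\|d_k\|$ is precisely what Property (*) is later used for — so no algebraic bookkeeping along your route yields $|\beta_{k+1}^L|\le C_1\|s_k\|+C_2\|s_k\|^p$ with constants independent of $\|d_k\|$. The missing idea, which is the heart of the paper's proof, is to bound the \emph{ratio} $\bigl|d_k^Tg_{k+1}/d_k^Tz_k^{(m)}\bigr|$ by a constant directly from the second (modified) Wolfe inequality, splitting on the sign of $\mu_k$: for $\mu_k>0$ one gets $\max\{\sigma/(1-\sigma),1\}$ as in the standard case, and for $\mu_k\le0$ the extra term $\min\{t_k,0\}s_k$ in the line search is exactly what yields $g_{k+1}^Td_k<d_k^Tz_k^{(m)}$ together with $d_k^Tg_{k+1}/d_k^Tz_k^{(m)}\ge\rho/(\sigma-1)$. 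With that constant ratio bound the second summand is $O(\|s_k\|^2)=O(\|s_k\|)$ on the level set, giving $|\beta_{k+1}|\le\bar c\|s_k\|$ and both parts of Property (*) at once.

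Two secondary points. First, you do not need the $\beta_{k+1}^R$ branch at all: since $\beta_{k+1}^R\le0$ by sufficient descent, one has $\beta_{k+1}=\beta_{k+1}^L$ when $\beta_{k+1}^L\ge0$ and $\beta_{k+1}^L\le\beta_{k+1}\le0$ otherwise, so $|\beta_{k+1}|\le|\beta_{k+1}^L|$ always (this is the paper's first observation). Second, your argument for part (2) on that branch is invalid as stated: $\|s_k\|=\alpha_k\|d_k\|\le c$ together with an upper bound on $\alpha_k$ does not force $\|d_k\|$ to be large, so it does not make $\gamma/\|d_k\|\le1/b$ "automatic" — a small step may come from a small $\alpha_k$ rather than a large $\|d_k\|$. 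This flaw is harmless once you discard the $\beta_{k+1}^R$ branch, but the main gap in the $\beta_{k+1}^L$ estimate remains and requires the Wolfe-based ratio argument.
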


\begin{proof}
	By the definition of ${{\beta }_{k+1}}$ in \eqref{eq:beta k+1}, we have
	\begin{equation}
    \begin{split}
		\beta_{k+1}=\beta_{k+1}^{L},\text{ } if\text{ }\beta_{k+1}^{L} \ge 0, \notag\\
        0>\beta_{k+1} \ge \beta_{k+1}^{L},\text{ } if\text{ }\beta_{k+1}^{L} < 0.
    \end{split}
	\end{equation}
    Hence, $|\beta_{k+1}| \le |\beta_{k+1}^{L}|$. From \eqref{eq:gdec}, \eqref{eq:59} and the second condition in \eqref{eq:wolfe conditon}, it follows that
	\begin{equation}\label{eq:78}
		d_{k}^{T}{{z}_{k}^{(m)}}\ge -(1-\sigma )g_{k}^{T}d_{k} \ge \eta (1-\sigma)||g_k||^2 \ge \eta (1-\sigma)\xi^2.
	\end{equation}
    If $\mu_k>0$, it is easy to obtain
	\begin{equation}\label{3.15}
        |\frac{d_k^{T}g_{k+1}}{d_k^{T}z_k^{m}}| \le max \left\lbrace \frac{\sigma}{1-\sigma},1 \right\rbrace.
	\end{equation}
    If $\mu_k \le 0$, we have
	\begin{equation}\label{3.16}
        g_{k+1}^{T}d_k=d_k z_k^{m}-(-g_k^{T}d_k+t_k s_k^{T}d_k)<d_k z_k^{m}.
	\end{equation}
    The inequality above is due to that
	\begin{equation}
    \begin{split} \notag
        -g_k^{T}d_k+t_k s_k^{T}d_k &=\frac{1}{\alpha_{k}}( -g_k^{T}s_k+C\mu_{k}) \\ &=\frac{1}{\alpha_{k}}(-g_k^{T}s_k+C(2(f_{k}-f_{k+1})+(g_{k}+g_{k+1})^{T}s_{k}))\\ & \ge\frac{1}{\alpha_{k}}(-g_k^{T}s_k+C(-2\rho g_k^{T}s_k+g_k^{T}s_k+\sigma g_k^{T}s_k))\\
        & =\frac{1}{\alpha_{k}}(-g_k^{T}s_k+(\sigma-\rho)g_k^{T}s_k)>0.
	\end{split}
    \end{equation}
    Also, the second condition in \eqref{eq:wolfe conditon} gives
	\begin{equation}\label{3.17}
        \frac{d_k^{T}g_{k+1}}{d_k^{T}z_k^{m}} \ge \frac{\sigma-\frac{C\mu_k}{g_k^{T}s_k}}{\sigma-1} \ge \frac{\rho}{\sigma-1}.
	\end{equation}
    The second inequality above is from that $\frac{C\mu_k}{g_k^{T}s_k} \ge \sigma - \rho$.
	By \eqref{3.16} and \eqref{3.17}, we have
	\begin{equation}\label{3.18}
		|\frac{d_{k}^{T}{{g}_{k+1}}}{d_{k}^{T}{{z}_{k}^{(m)}}}| \le max \left\lbrace \frac{\rho}{1-\sigma},1 \right\rbrace,
	\end{equation}
    which with \eqref{3.15} implies
	\begin{equation}\label{3.19}
		|\frac{d_{k}^{T}{{g}_{k+1}}}{d_{k}^{T}{{z}_{k}^{(m)}}}| \le max \left\lbrace \frac{\sigma}{1-\sigma},1 \right\rbrace,
	\end{equation}
	Using \eqref{eq:52} and \eqref{eq:63}, we obtain
	\begin{equation}\label{eq:80}
		||{z}_{k}^{(m)}||\le ||y_k||+ \max \left\lbrace C,\frac{m}{m-2} \right\rbrace \frac{|{{\mu }_{k}}|}{||{{s}_{k}}|{{|}^{2}}}||{{s}_{k}}||\le \frac{(1+L)m}{m-2}||{{s}_{k}}||.
	\end{equation}
	It can be shown that together with \eqref{eq:51}, \eqref{eq:53}, \eqref{eq:78}, \eqref{3.19} and \eqref{eq:80},
	$$|{{{\beta }}_{k+1}}|\le \bar{c}||{{s}_{k}}||,\forall k\ge 0, $$
	where $\bar{c}$ is a positive constant. Let $b=\max \{1,2\bar{c}M\},\text{ }c=\frac{1}{b\bar{c}}$, then for all $k\ge 0$,
	\begin{equation}\label{eq:82}
		|{{{\beta }}_{k+1}}|\le b,
	\end{equation}
	\begin{equation}\label{eq:83}
		||{{s}_{k}}||\le c\Rightarrow |{{{\beta }}_{k+1}}|\le \frac{1}{b}.
	\end{equation}
	The relations \eqref{eq:82} and \eqref{eq:83} indicate that ${{\beta}_{k+1}}$   has the Property (*) in \cite{ref20}. The proof is complete.
\end{proof}

Based on the above lemmas, the global convergence of SCGMMSE is established as follows:
\begin{theorem}\label{thm:3.8}
	Suppose Assumption \ref{p:3.1} holds. Consider the iterative form \eqref{IM}, where ${{\alpha }_{k}}$ is determined by the modified Wolfe conditions and $d_k$ is given by \eqref{eq:dk+1} containing $\beta _{k}$ \eqref{eq:beta k+1} and $\theta_{k+1}$ \eqref{eq:thetak+1}, then
	\begin{equation}
		\underset{k\to \infty }{\mathop{\lim }}\,\inf ||{{g}_{k}}||=0,
	\end{equation}
	i.e. SCGMMWLS is globally convergent.
\end{theorem}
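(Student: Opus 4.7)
The plan is to prove Theorem \ref{thm:3.8} by contradiction, applying the Gilbert--Nocedal framework from \cite{ref20} whose key ingredients (sufficient descent, Zoutendijk condition, Property (*)) have already been established in Theorem \ref{thm:ndk+1}, Lemma \ref{lemma:2.7}, Lemma \ref{lem:3.6}, and Lemma \ref{lemma:3.7}. Concretely, I would suppose \eqref{eq:59} fails to be violated, i.e., there exists $\xi>0$ with $\|g_k\|\ge\xi$ for all $k$, and aim to derive a contradiction with the boundedness of the level set $S$ given by \eqref{eq:51}.

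First I would record the following immediate consequences of the standing hypothesis $\|g_k\|\ge\xi$: the sufficient descent condition \eqref{eq:gdec} combined with Lemma \ref{lemma:2.7} already forces $\|d_k\|\to\infty$ along any subsequence on which the Zoutendijk sum converges, since $(g_k^Td_k)^2/\|d_k\|^2\to 0$ while $(g_k^Td_k)^2\ge \eta^2\xi^4$. This gives $\sum 1/\|d_k\|^2<\infty$ as in \eqref{eq:77}, which is the quantitative input needed for Lemma \ref{lem:3.6}'s conclusion $\sum\|u_k-u_{k-1}\|^2<\infty$.

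The heart of the argument is the standard two-step estimate from Gilbert--Nocedal. Step one: fix $\lambda>0$ and $\Delta\in\mathbb{Z}_+$, and use Property (*) from Lemma \ref{lemma:3.7} to show that if $\|s_i\|\le \lambda$ on a long block of $\Delta$ consecutive indices, then the recursion $d_{i+1}=-\theta_{i+1}g_{i+1}+\beta_{i+1}d_i$ together with $|\beta_{i+1}|\le 1/b$ and the boundedness $\|-\theta_{i+1}g_{i+1}\|\le\tau\gamma$ forces $\|d_{i+1}\|$ to remain bounded, contradicting $\|d_k\|\to\infty$. Step two: couple this with the fact from Lemma \ref{lem:3.6} that $u_k$ changes slowly in an $\ell^2$-sense; choosing $\Delta$ large enough relative to $M$ and $\lambda$ small enough, one shows that on any window of $\Delta$ consecutive iterations the indices with $\|s_i\|>\lambda$ must be numerous, whence the telescoping sum $x_{k+\Delta}-x_k=\sum_{i=k}^{k+\Delta-1}s_i$ can be made to project onto $u_k$ with magnitude exceeding $2M$, contradicting \eqref{eq:51}.

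The main obstacle I anticipate is executing step two cleanly: the argument relies on a decomposition $s_i = \|s_i\|u_i$ and an estimate of the form $\|u_i-u_k\|\le \sum_{j=k+1}^{i}\|u_j-u_{j-1}\|\le \sqrt{\Delta}\bigl(\sum_{j\ge k+1}\|u_j-u_{j-1}\|^2\bigr)^{1/2}$, which by Lemma \ref{lem:3.6} is small once $k$ is large; one then writes
\begin{equation}
\Bigl\|\sum_{i=k}^{k+\Delta-1} s_i\Bigr\| \ge \Bigl|u_k^T\sum_{i=k}^{k+\Delta-1}s_i\Bigr| \ge \sum_{i=k}^{k+\Delta-1}\|s_i\|\,u_k^T u_i,\notag
\end{equation}
and bounds $u_k^T u_i$ below by $1/2$ using the slow-change estimate, combined with the lower bound $\#\{i:\|s_i\|>\lambda\}>\Delta/2$ from step one, to get $\|x_{k+\Delta}-x_k\|>\lambda\Delta/4$. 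Choosing $\Delta>8M/\lambda$ yields $\|x_{k+\Delta}-x_k\|>2M$, contradicting $\|x_{k+\Delta}\|,\|x_k\|\le M$. Since this is exactly the machinery of Theorem 4.3 in \cite{ref20} and all hypotheses (sufficient descent, Zoutendijk, and Property (*)) are in hand, I would cite that framework and only fill in the pieces specific to our $\beta_{k+1}^M$ and $\theta_{k+1}$.
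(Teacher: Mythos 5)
Your proposal matches the paper's proof essentially verbatim: both argue by contradiction under \eqref{eq:59} and invoke the Gilbert--Nocedal machinery (Lemma 4.2 and Theorem 4.3 of \cite{ref20}), with sufficient descent, the Zoutendijk condition, the slow change of $u_k$ from Lemma \ref{lem:3.6}, and Property (*) from Lemma \ref{lemma:3.7} supplying the hypotheses, ending in the same contradiction $\|x_{k+\Delta}-x_k\|>\lambda\Delta/4\ge 2M$ with $\Delta=\lceil 8M/\lambda\rceil$ against \eqref{eq:51}. The only cosmetic difference is that your step-one sketch obtains the count $|\mathcal{K}^{\lambda}_{k,\Delta}|>\Delta/2$ somewhat loosely (the actual Lemma 4.2 argument contradicts $\sum 1/\|d_k\|^2<\infty$ via at-most-linear growth of $\|d_k\|^2$, not merely $\|d_k\|\to\infty$ over one all-small block), but since you defer to \cite{ref20}, as the paper does, this is immaterial.
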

\begin{proof}
	Using Property (*) and the conclusion yielded by \eqref{eq:59} that $||d_k||^2$ grows at most linearly, we can show similarly to Lemma 4.2 in \cite{ref20} that there exists $\lambda>0$ such that, for any $\Delta \in N^{*}$ and any index $k_{0}$, there is a greater index $k \ge k_{0}$ such that $|\mathcal{K}_{k,\Delta}^{\lambda}|>\frac{\Delta}{2}$, where $\mathcal{K}_{k,\Delta}^{\lambda}:=\{i\in N^{*}: k \le i \le k+\Delta -1, ||s_{i-1}||>\lambda \}$ and $|\mathcal{K}_{k,\Delta}^{\lambda}|$ denote the number of elements of $\mathcal{K}_{k,\Delta}^{\lambda}$. By the above, Lemma \ref{lem:3.6} and the boundedness of $\{x_k\}$ \eqref{eq:51}, we can obtain a contradiction similarly to the proof of Theorem 4.3 in \cite{ref20} that $2M \ge \frac{1}{2} \sum_{i=k}^{k+\Delta-1} ||s_{i-1}||>\frac{\Delta}{2}|\mathcal{K}_{k,\Delta}^{\lambda}|>\frac{\lambda \Delta}{4} \ge 2M$, where $\Delta$ is chosen as $\lceil \frac{8M}{\lambda} \rceil$. This contradiction supports the truth of \eqref{eq:58}.
\end{proof}

\section{Numerical experiments}

In this section, we report some numerical comparison results of Algorithm SCGMMWLS, denoted M1, versus the following three other algorithms. \\
\\
DK: The algorithm proposed by Dai and Kou;\\
Jian: The algorithm proposed by Jian;\\
M1: Our algorithm SCGMMWLS; \\
M2: The spectral CG algorithm with standard Wolfe line search and $d_{k+1}$ is given by
\begin{align}
& {{d}_{k+1}}=-{{\theta }_{k+1}}{{g}_{k+1}}+{{\beta }_{k+1}}{{d}_{k}},\text{ }{{d}_{0}}=-{{g}_{0}}, \notag\\
& {{\beta }_{k+1}}=\max \{\frac{g_{k+1}^{T}{v_{k}^{(m)}}}{d_{k}^{T}{v_{k}^{(m)}}}-\frac{||{v_{k}^{(m)}}|{{|}^{2}}}{d_{k}^{T}{v_{k}^{(m)}}}\frac{g_{k+1}^{T}{{d}_{k}}}{d_{k}^{T}{v_{k}^{(m)}}}, \frac{g_{k}^{T}{{d}_{k}}}{||{{d}_{k}}|{{|}^{2}}}\}, \notag \\
& {{\theta }_{k+1}}=
\begin{cases}
{{{\tilde{\theta }}}_{k+1}}, & if \text{ } {{{\tilde{\theta }}}_{k+1}}\in [\frac{1}{4} +\eta ,\tau ]\text{,}\notag  \\
1, & otherwise\text{,} \notag\\
\end{cases} \notag
 \notag \\
& {{{\tilde{\theta }}}_{k+1}}=\frac{1}{{{g}_{k+1}^{T}}v_{k}^{(m)}}(s_{k}^{T}{{g}_{k+1}}+{{\beta }_{k+1}}{{d}_{k}^{T}}v_{k}^{(m)}), \notag \\
&v_{k}^{(m)}=y_{k}+\frac{m}{m-2}\frac{max\{\mu_{k},0\}}{\Vert s_{k} \Vert ^{2}}s_{k},\text{ } m\geq 3, \text{ } m\in Z,\notag \\
&\mu_{k}=2(f_{k}-f_{k+1})+(g_{k}+g_{k+1})^{T}s_{k},\notag
\end{align}

All experiments are coded in VC++6.0 and run on a laptop with Inter Core i5-9300H CPU, 16GB RAM memory and the Windows 10 operating system. Our test problems are extracted from a collection of unconstrained optimization test functions \cite{ref13} with variable dimensions from 100 to 10000.

The related parameters are set as follows:
\begin{equation}
\eta =0.001,\tau =10,(\rho,\sigma)=
\begin{cases}
(0.18,0.2),\text{ }if \text{ } M1, \notag \\
(0.1,0.9),\text{ }otherwise. \notag \\
\end{cases} .	
\end{equation}
The stopping criterion is that $||{{g}_{k}}|{{|}_{\infty }}\le {{10}^{-8}}$ or the number of iterations exceeds 10000.

It is known that the number of iterations (NI), the number of function evaluations (NF), the number of gradient evaluations (NG) and so on are important factors to judge the numerical performance of CG-type algorithms. We use the performance profiles of Dolan and More \cite{ref22} to evaluate the performance of algorithms. Define $\mathcal{P}$ and $\mathcal{S}$ respectively as the set consisting of $n_p$ test problems and the set of compared solvers. Define $NI_{p,s},NF_{p,s}$ and $NG_{p,s}$ to respectively be the number of iterations, the number of function evaluations and the number of gradient evaluations for solver $s$ solving problem $p$. Define the performance ratio as
\begin{equation}
	r_{p,s}^{I}=\frac{NI_{p,s}}{NI_{p}^{*}},\text{ }r_{p,s}^{F}=\frac{NF_{p,s}}{NF_{p}^{*}},\text{ }r_{p,s}^{G}=\frac{NG_{p,s}}{NG_{p}^{*}}, \notag
\end{equation}
where
\begin{equation}
	NI_{p}^{*}=min\{NI_{p,s},s \in \mathcal{S} \},\text{ }NF_{p}^{*}=min\{NF_{p,s},s \in \mathcal{S} \},\text{ }NG_{p}^{*}=min\{NG_{p,s},s \in \mathcal{S} \}. \notag
\end{equation}
It is obvious that $r_{p,s}^{I},r_{p,s}^{F},r_{p,s}^{G} \ge 1$ for all $p,s$. If a solver fails to solve a problem, then the ratio $r_{p,s}^{I},r_{p,s}^{F},r_{p,s}^{G}$ will be assigned to a lager number. We define the following cumulative distribution function, related to the performance ratio, which displays the performance of each solver $s$. Take for example $r_{p,s}^{I}$.
\begin{equation}
	\rho_{s}^{I}(\tau)=\frac{size\{ p \in \mathcal{P},r_{p,s}^{I} \le \tau \}}{n_p}. \notag
\end{equation}
$\rho_{s}^{I}(1)$ implies that the percentage of problems where solver $s$ wins.

Figure \ref{order} presents the performance profiles of SCGMMWLS with $m=\infty,5,4,3$. We observe that SCGMMWLS($m=3$) is obviously superior to the others, winning about 85\% of test problems and standing in the first place; Although the number of test problems SCGMMWLS($m=4$) wins is more than that of the other two, SCGMMWLS($m=5$) and SCGMMWLS($m=\infty$), it performs overall less well than they do. It is also worth noting that SCGMMWLS($m=5$) and SCGMMWLS($m=\infty$) have nearly identical performance. These results indicate that SCGMMWLS with $m=3$ is actually most efficient, but the performance does not get worse with growth of order $m$.

\begin{figure}[H]
\centering
\begin{minipage}[t]{0.3\textwidth}
\centering
\includegraphics[width=4.6cm]{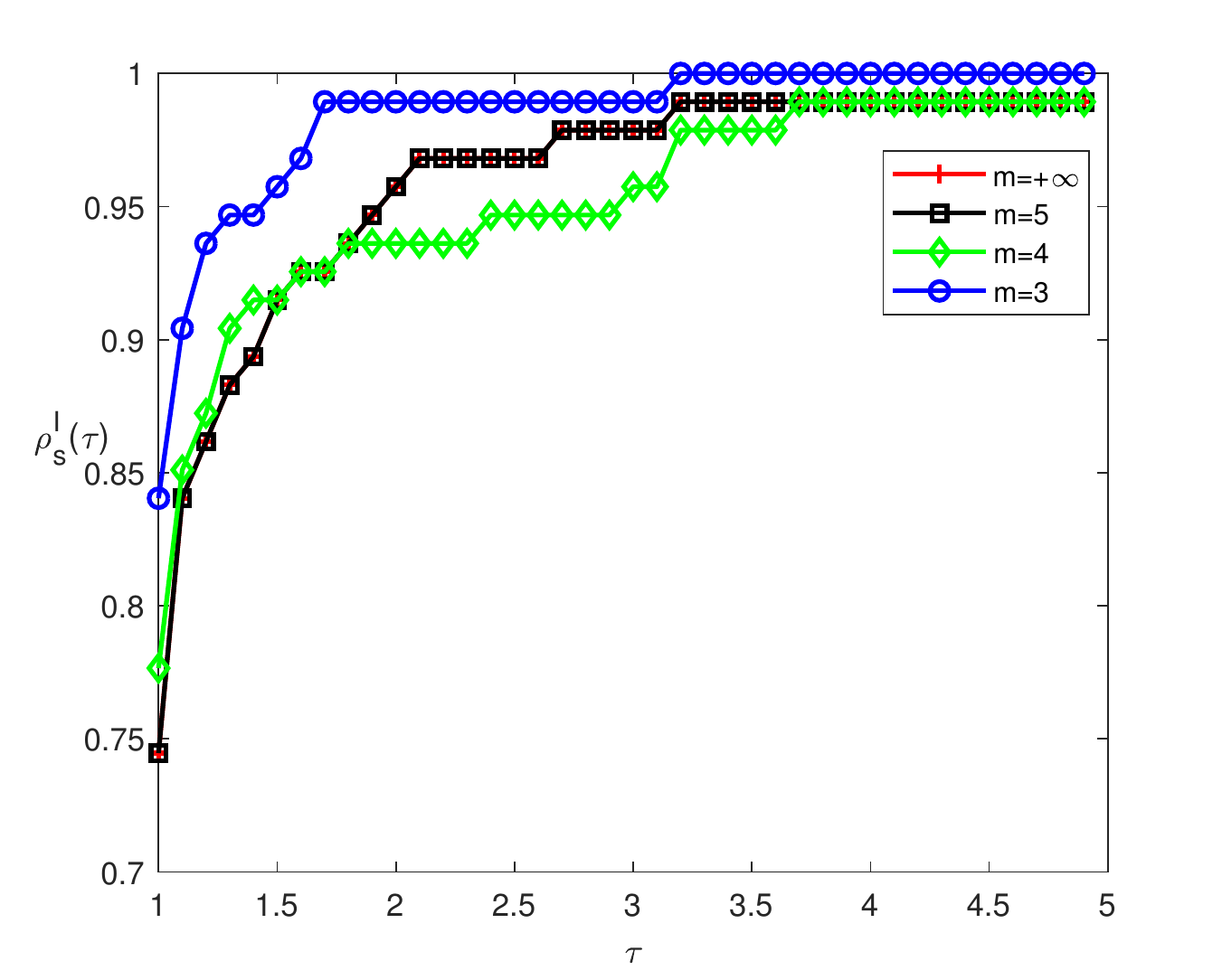}
\end{minipage}
\begin{minipage}[t]{0.3\textwidth}
\centering
\includegraphics[width=4.6cm]{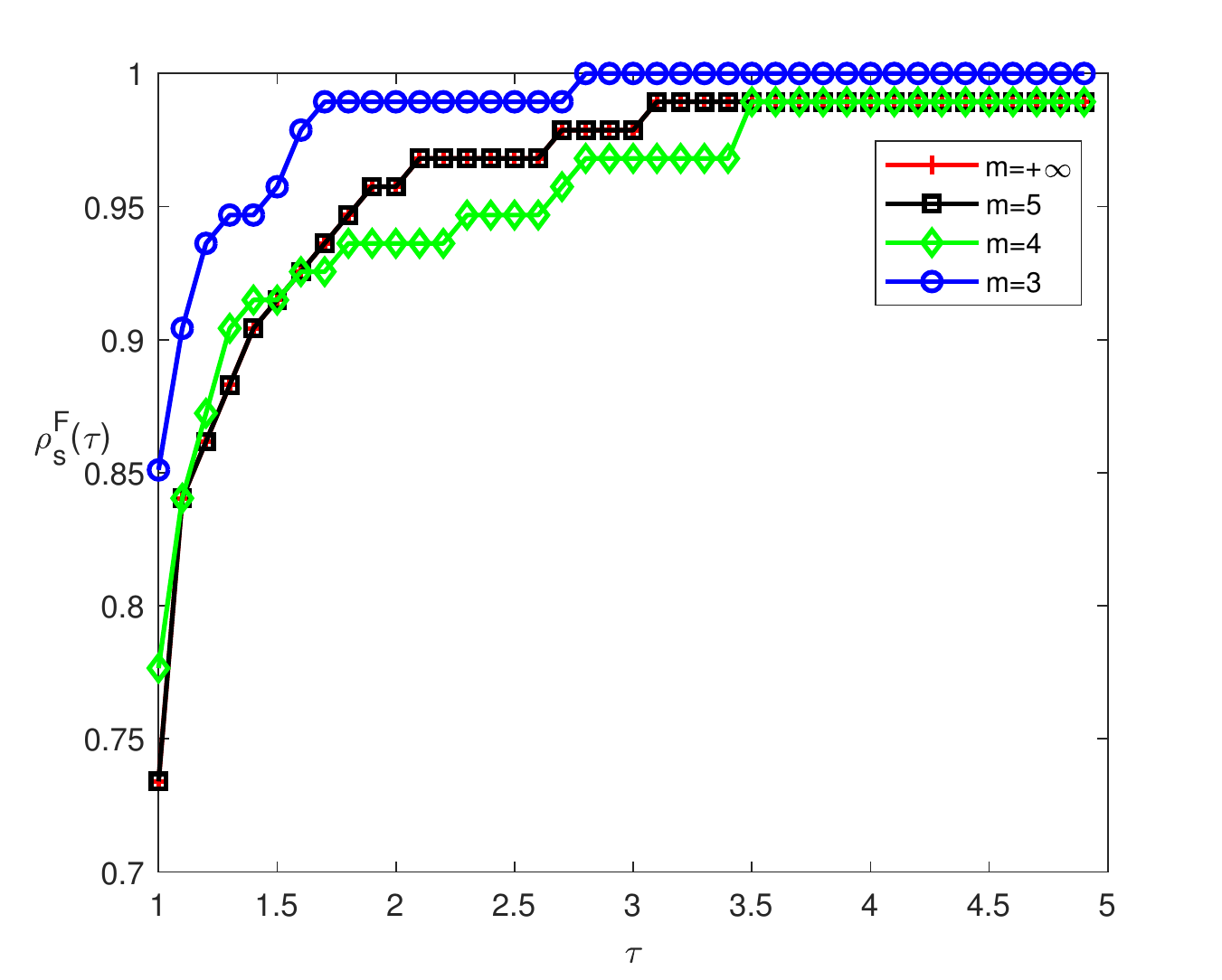}
\end{minipage}
\begin{minipage}[t]{0.3\textwidth}
\centering
\includegraphics[width=4.6cm]{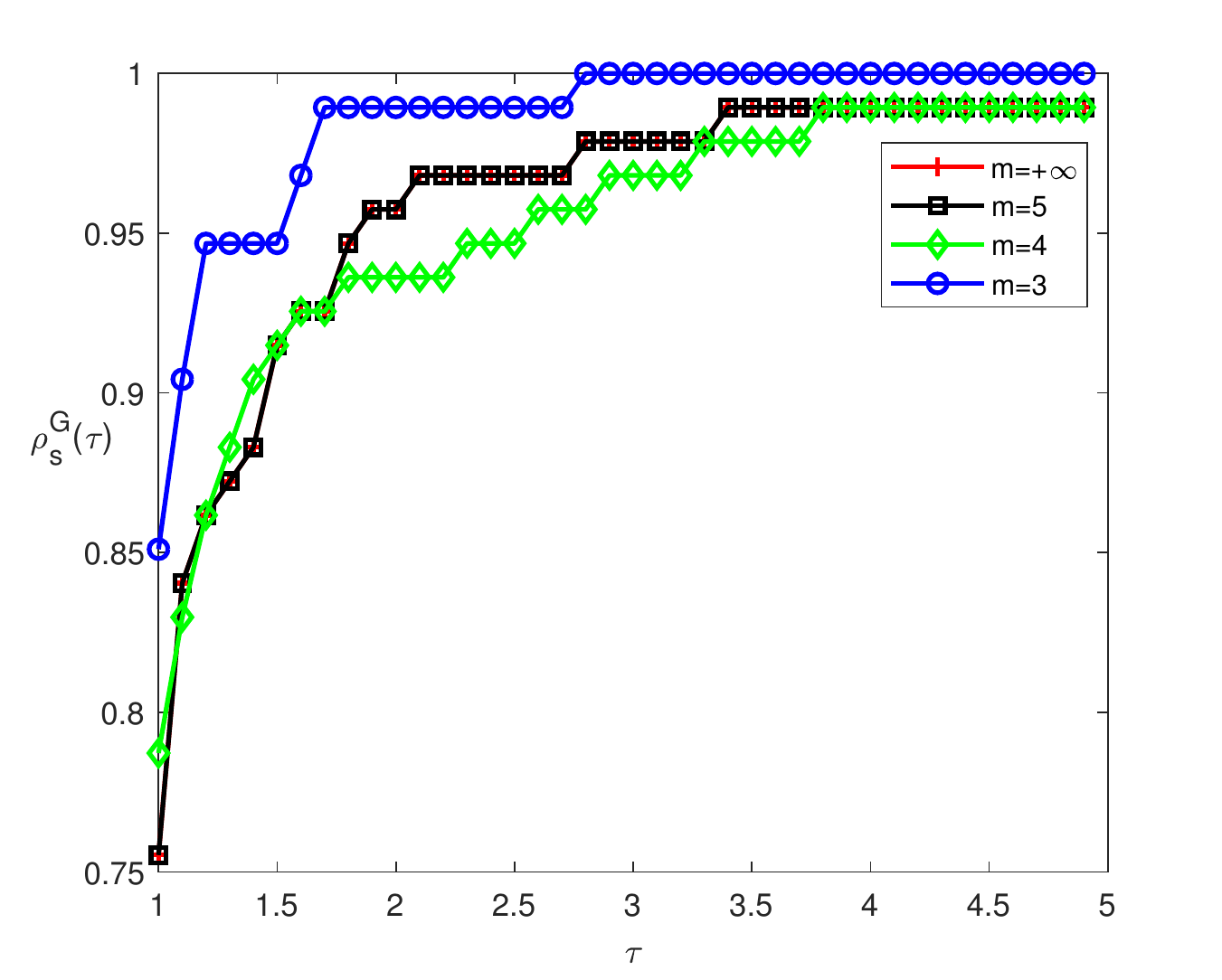}
\end{minipage}
\caption{Performance profiles of M1 with $m=\infty,5,4,3$  based on the number of iterations, function evaluations and gradient evaluations}
\label{order}
\end{figure}

M2 is actually SCGMMWLS with $t_k$ in \eqref{eq:Bkskl} replaced by $\frac{m}{m-2}\frac{max\{\mu_{k},0\}}{\Vert s_{k} \Vert ^{2}}$ and the Modified Wolfe line search replaced by the standard Wolfe line search. To keep the curvature condition holding, M2 applies the scheme that $max\{\mu_k,0\}$ supersedes $\mu_k$, abandoning negative $\mu_k$. Based on the results of the first experiment, we compare M1 with M2 in case $m=3$. In Figure \ref{M1M2}, M1 respectively wins about 80\%, 70\% and 63\% of test problems in NI, NF and NG while M2 wins about 35\%, 45\% and 52\% of test problems. M1 has a appreciable superiority, which means using negative $\mu_k$ can make further improvement on the performance of the algorithm. It is natural that using negative $\mu_k$ makes the modified Wolfe line search slightly stricter, may be causing more function and gradient evaluations each iteration. However, it can contribute to a more appropriate next iterate under the current iterate and less total function and gradient evaluations.

\begin{figure}[H]
\centering
\begin{minipage}[t]{0.3\textwidth}
\centering
\includegraphics[width=4.6cm]{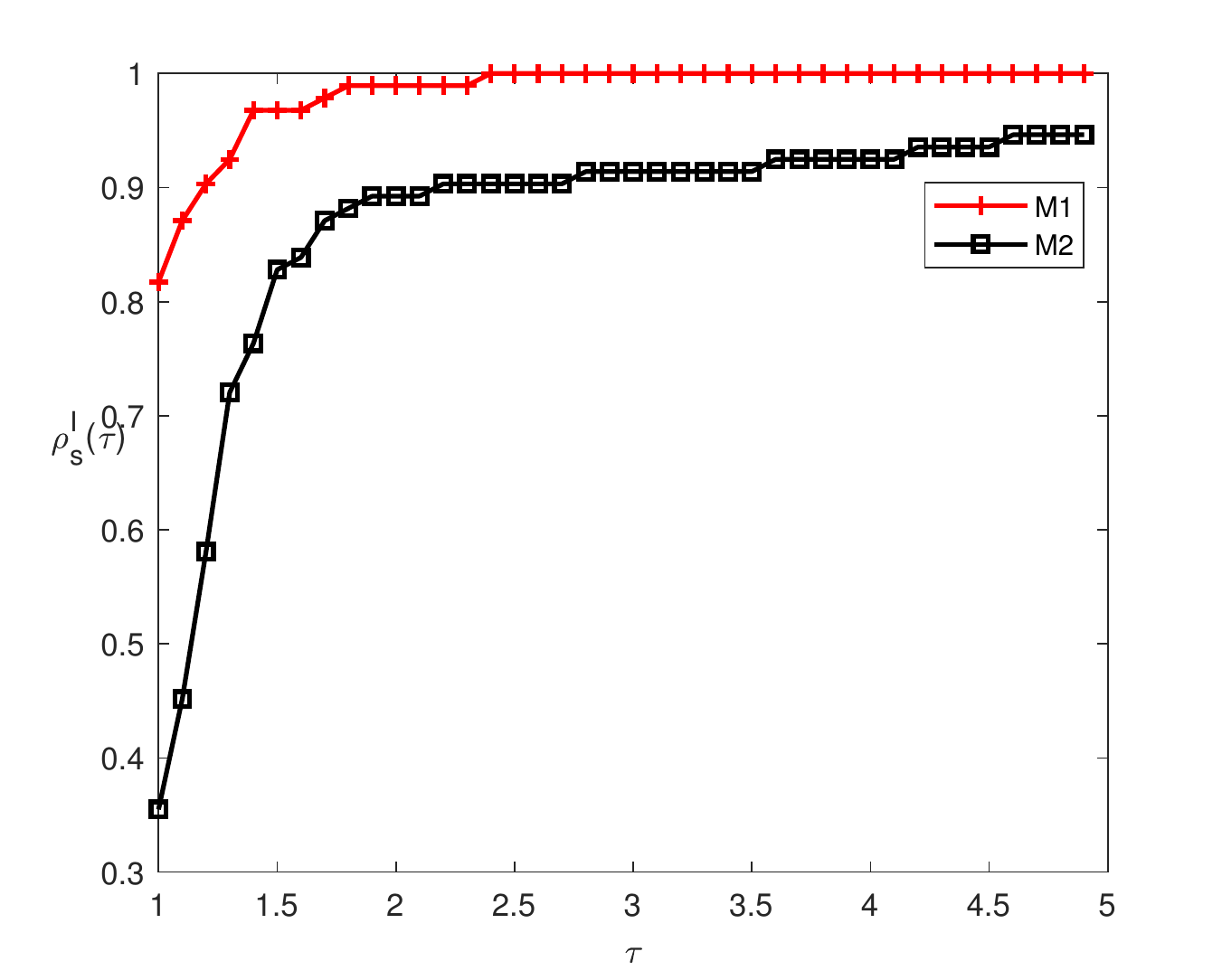}
\end{minipage}
\begin{minipage}[t]{0.3\textwidth}
\centering
\includegraphics[width=4.6cm]{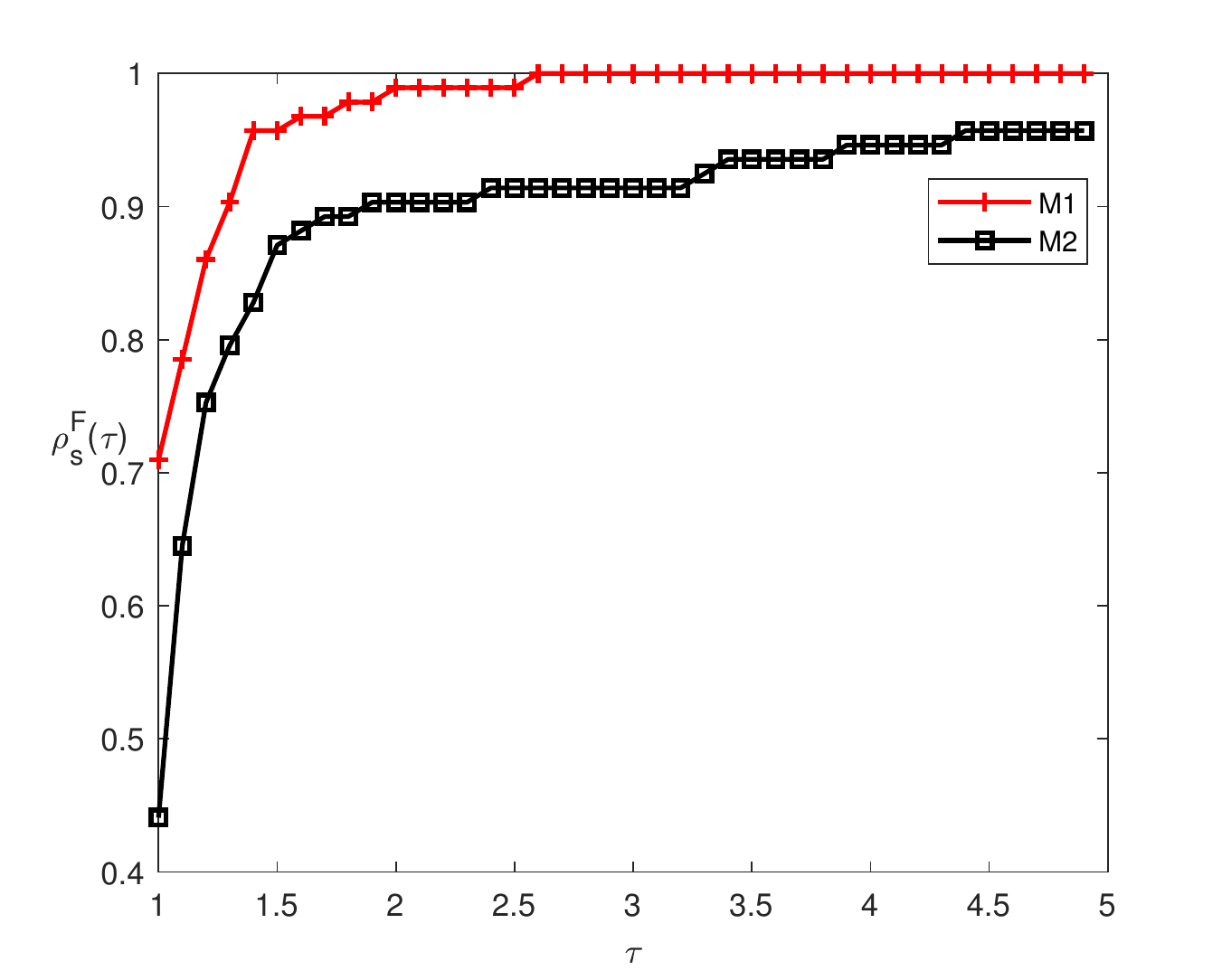}
\end{minipage}
\begin{minipage}[t]{0.3\textwidth}
\centering
\includegraphics[width=4.6cm]{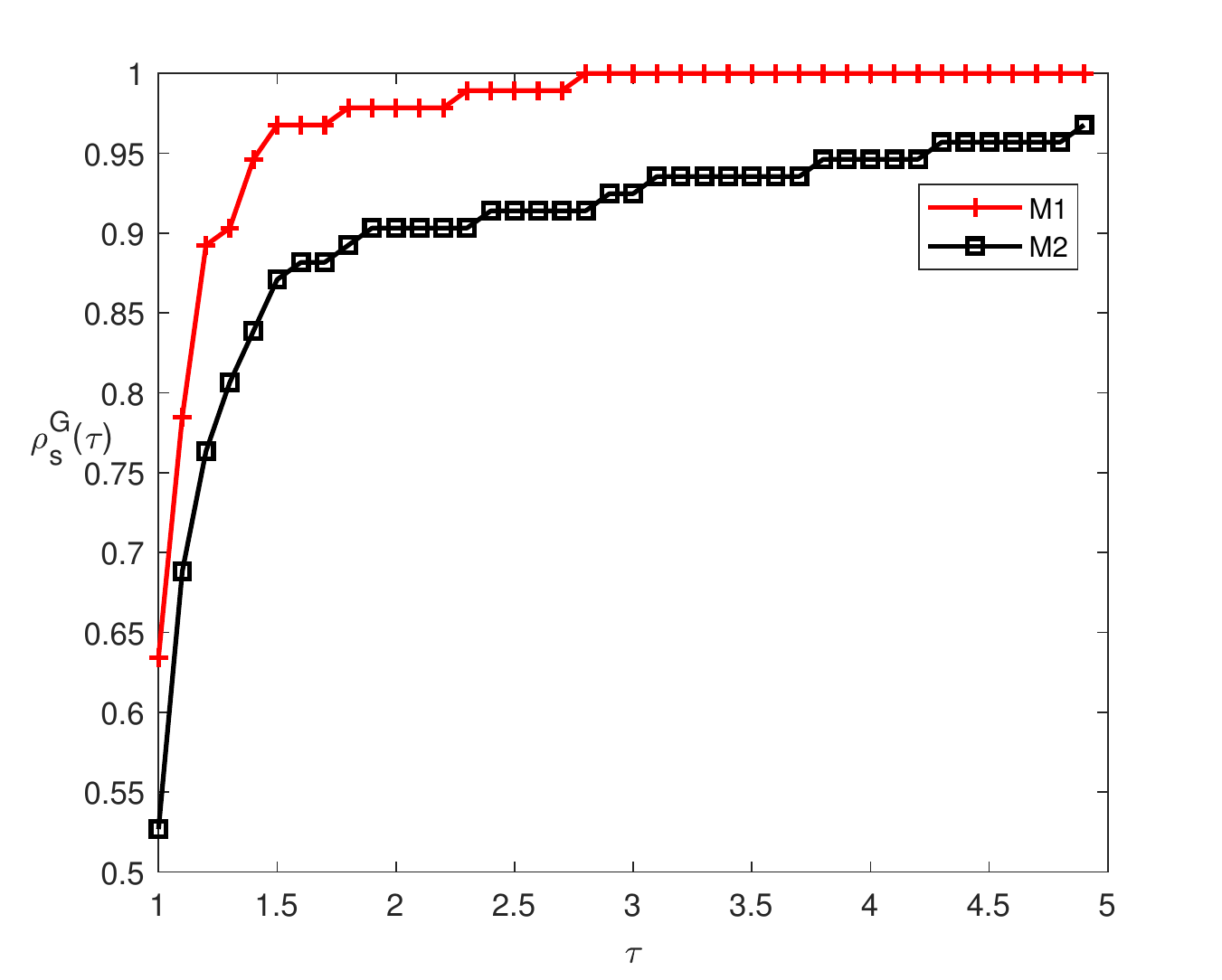}
\end{minipage}
\caption{Performance profiles of M1 and M2  based on the number of iterations, function evaluations and gradient evaluations}
\label{M1M2}
\end{figure}

We also compare M1($m=3$) with DK and Jian. From Figure \ref{DJO}, we can see that M1 is fastest for about 75\%, 60\% and 55\% of test problems in NI, NF and NG, occupying the first place. M1 obviously performs better than DK and Jian do.

\begin{figure}[H]
\centering
\begin{minipage}[t]{0.3\textwidth}
\centering
\includegraphics[width=4.6cm]{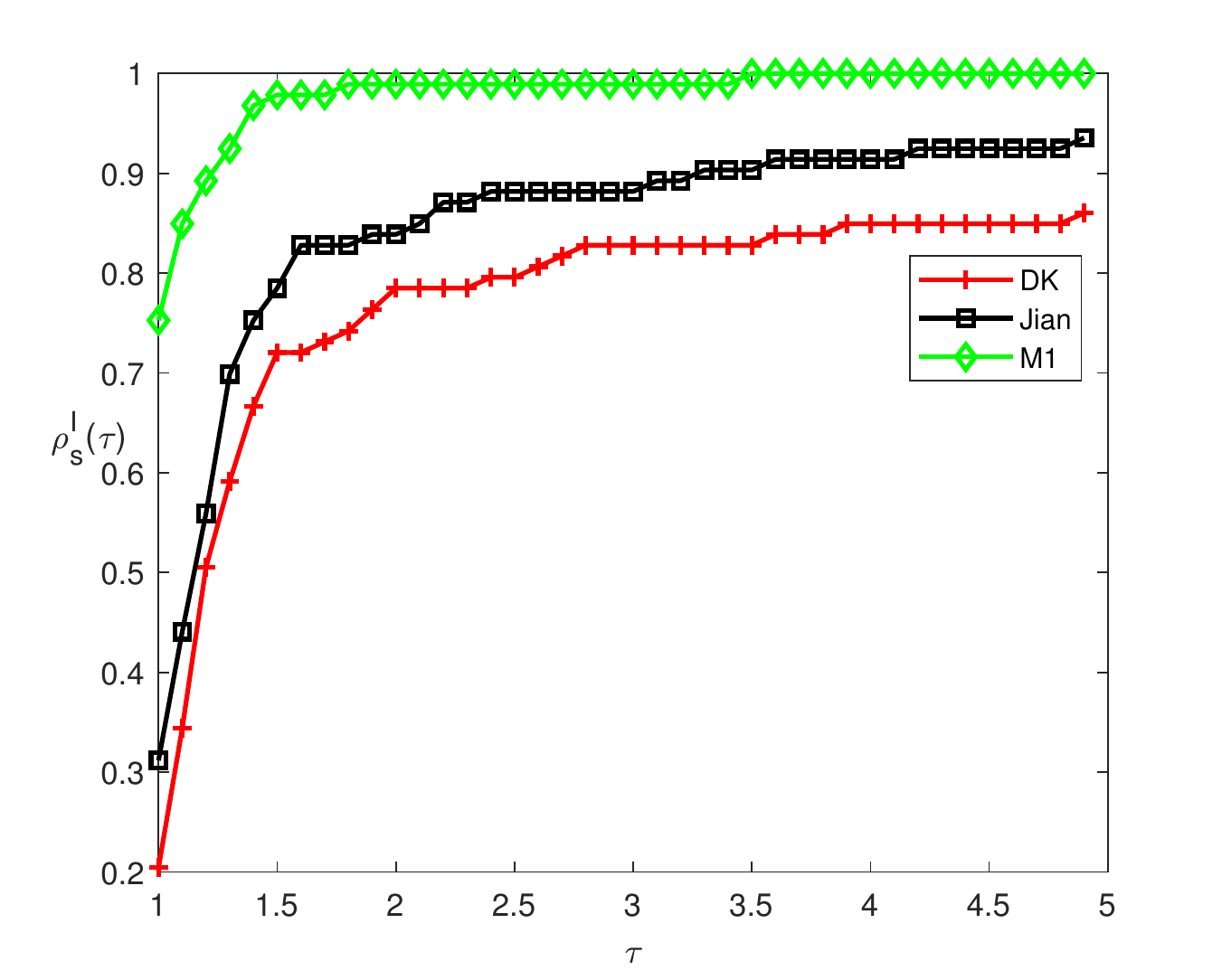}
\end{minipage}
\begin{minipage}[t]{0.3\textwidth}
\centering
\includegraphics[width=4.6cm]{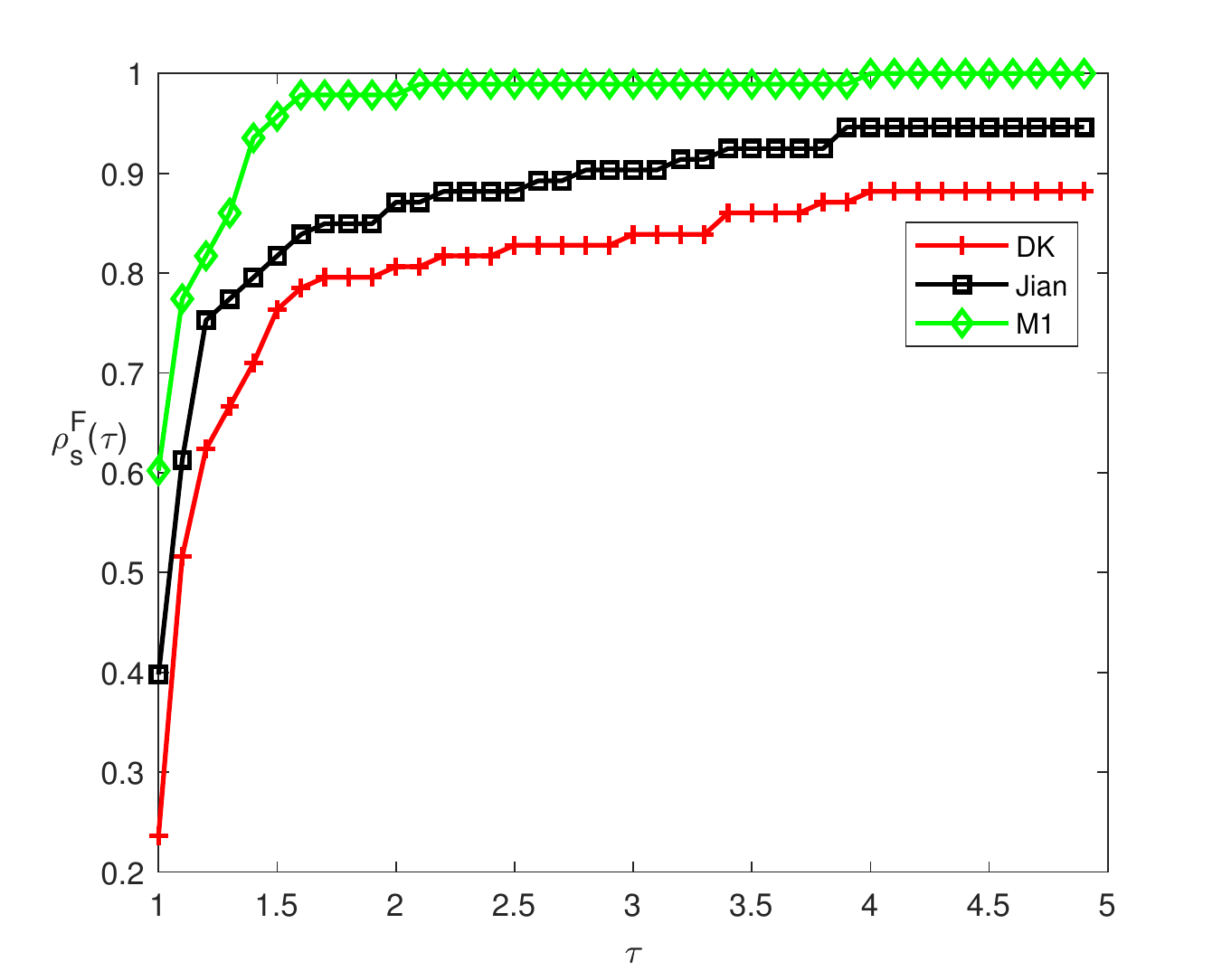}
\end{minipage}
\begin{minipage}[t]{0.3\textwidth}
\centering
\includegraphics[width=4.6cm]{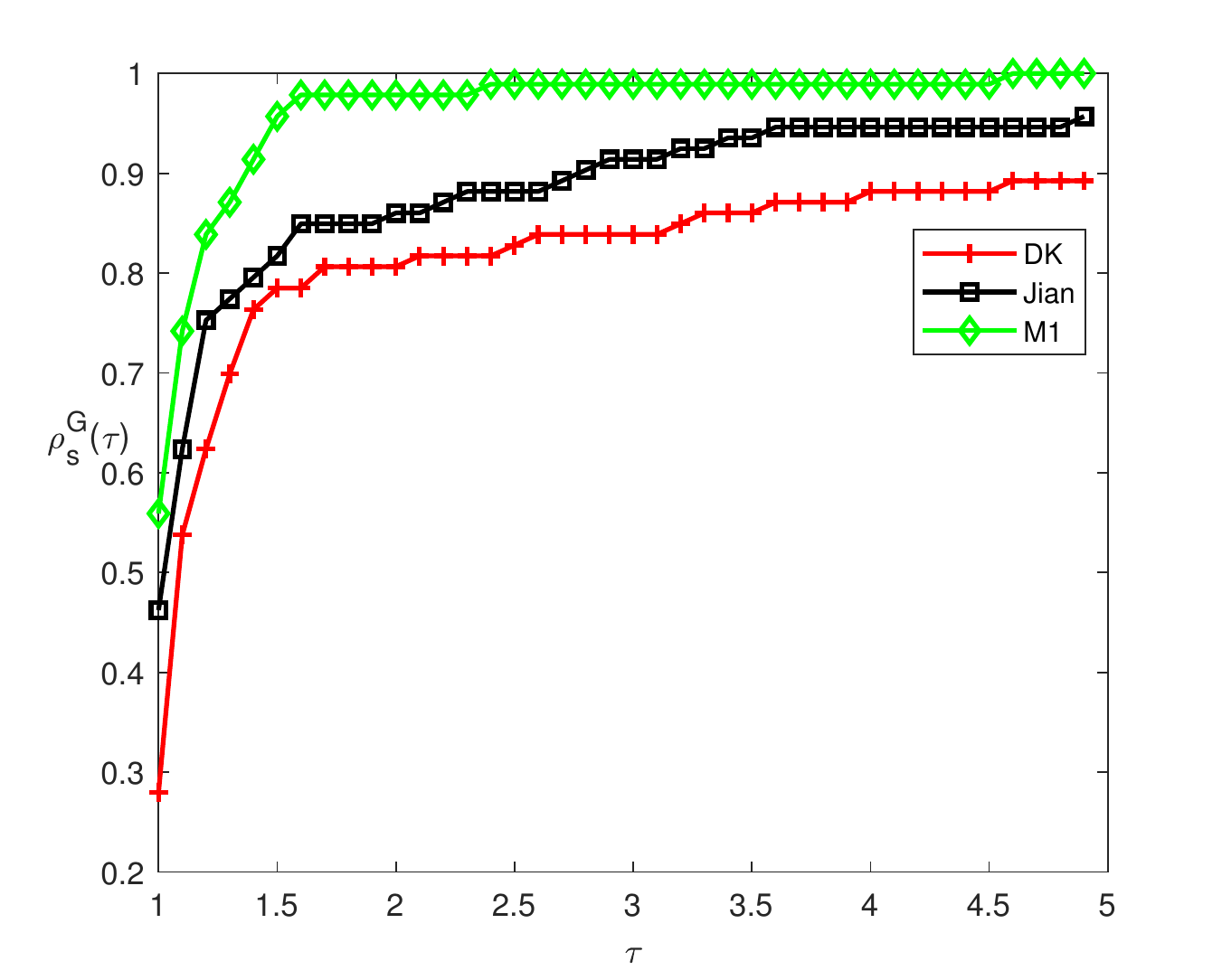}
\end{minipage}
\caption{Performance profiles of DK, Jian and M1 based on the number of iterations, function evaluations and gradient evaluations}
\label{DJO}
\end{figure}

\iffalse
\begin{figure}
	\centering
	\includegraphics[width=5in, keepaspectratio]{M2.eps}
	\caption{Performance profiles of M1 and M2  based on the number of iterations, function evaluations and gradient evaluations}
	\label{M1M2}
\end{figure}
\fi
\iffalse
\begin{figure}
	\centering
	\includegraphics[width=5in, keepaspectratio]{DK.eps}
	\caption{Performance profiles of DK, Jian and M1 based on the number of iterations, function evaluations and gradient evaluations}
	\label{DJO}
\end{figure}
\fi
\newpage

\section{Conclusions}
In this paper, based on a new family of modified secant equations and a modified Wolfe line search, a novel spectral CG algorithm has been proposed for unconstrained optimization problems.

According to the mth-order Taylor expansion of the objective function and cubic Hermite interpolation conditions, we derive a family of modified secant equations with higher accuracy in approximation of the Hessian matrix of the objective function. It contains previous a series of modified variants. To keep the modified secant equation always meeting the curvature condition without forcing the negative $\mu_k$ to be assigned 0, we develop a modified Wolfe line search. Based on this line search and Jian's spectral CG algorithm, a new improved spectral CG algorithm, SCGMMWLS, is presented and the global convergence of SCGMMWLS is established for general nonlinear functions.

By using the performance profile introduced by Dolan and More, on a collection of unconstrained optimization test functions, we verify that SCGMMWLS performs best in case $m=3$; We compare SCGMMWLS to the algorithm with $max\{\mu_k,0\}$ and standard Wolfe line search and find SCGMMWLS has a superior performance. It means that using negative $\mu_k$ can make further improvement on the performance of the algorithm. Besides, comparative experiments among SCGMMWLS, the algorithm proposed by Jian and the algorithm from Dai and Kou are conducted. The numerical results indicate that SCGMMWLS outperforms the other two algorithms.

\clearpage

\end{document}